\newtheorem{thm}{Theorem}[section]
\newtheorem{lem}[thm]{Lemma}
\newtheorem{prop}[thm]{Proposition}
\theoremstyle{definition}
\newtheorem{example}[thm]{Example}
\newtheorem{rem}[thm]{Remark}
\newtheorem*{ack}{Acknowledgements}
\numberwithin{equation}{section}
\newcommand{\RR}{\mathbb{R}}
\DeclareMathOperator{\Iso}{Isom}
\DeclareMathOperator{\inte}{int}
\DeclareMathOperator{\img}{Image}
\newcommand{\sphere}{\mathrm{\mathbb{S}}}
\newcommand{\SO}{\mathrm{SO}}
\newcommand{\Id}{\mathsf{1}}
\begin{document}

%-----------------------------------------------------------------------------
% BEGIN FRONT MATTER ----------------------------------------------------------
%-----------------------------------------------------------------------------

% TITLE

\title[Isometry groups of Alexandrov spaces]{Isometry groups of Alexandrov spaces}

% AUTHOR 1

\author[F. Galaz-Garcia]{Fernando Galaz-Garcia*}
\address[Galaz-Garcia]{Mathematisches Institut, WWU M\"unster, Germany.}
\email{f.galaz-garcia@uni-muenster.de}
\thanks{*The author is part of  SFB 878: \emph{Groups, Geometry \& Actions}, at the University of M\"unster.}

% AUTHOR 2

\author[L. Guijarro]{Luis Guijarro**}
\address[Guijarro]{ Department of Mathematics, Universidad Aut\'onoma de Madrid, and ICMAT CSIC-UAM-UCM-UC3M, Spain.}
\curraddr{}
\email{luis.guijarro@uam.es}
\thanks{**Supported by research grants MTM2008-02676,  MTM2011-22612 from the Ministerio de Ciencia e Innovaci\'on (MCINN) and MINECO: ICMAT Severo Ochoa project SEV-2011-0087}

% DATE
\date{\today}

% MATH SUBJECT CLASSIFICATION AND KEYWORDS

\subjclass[2000]{53C23, 53C20}
\keywords{Alexandrov space, isometry group, boundary, symmetric space, locally symmetric space}

% ABSTRACT

\begin{abstract}
Let $X$ be an Alexandrov space (with curvature bounded below). We determine the maximal dimension of the isometry group $\Iso(X)$ of $X$ and show that $X$ is isometric to a Riemannian manifold, provided the dimension of $\Iso(X)$ is maximal. We determine gaps in the possible dimensions of $\Iso(X)$. We determine the maximal dimension of $\Iso(X)$ when the boundary $\partial X$ is non-empty and classify up to homeomorphism Alexandrov spaces with boundary and isometry group of maximal dimension. We also show that a symmetric Alexandrov space is isometric to a  Riemannian manifold. 
\end{abstract}

\maketitle

%-----------------------------------------------------------------------------
% END FRONT MATTER ----------------------------------------------------------
%-----------------------------------------------------------------------------

%-----------------------------------------------------------------------------
%	MAIN MATTER	-----------------------------------------------------------------------------
%-----------------------------------------------------------------------------

%---------------------------------------------
% SECTION: INTRODUCTION
%---------------------------------------------

\section{Introduction}
Alexandrov spaces (with a lower curvature bound) are synthetic generalizations of Riemannian manifolds with a lower (sectional) curvature bound. These spaces  arise naturally as Gromov-Hausdorff limits of sequences of Riemannian $n$-manifolds with a uniform lower curvature bound or as orbit spaces of  isometric group actions on Riemannian manifolds with curvature bounded below. Alexandrov spaces have provided a useful tool in the study of smooth and Riemannian manifolds (see, for example, \cite{Gr1,Gr2,Pe2,Pe3,Pe4,SY,Wi}). 

Since the class of Alexandrov spaces properly contains the class of Riemannian manifolds, it is natural to ask to what extent  Riemannian geometry can be generalized to the Alexandrov setting. This problem has received significant attention, especially from an analytic point of view (see, for example, \cite{KMS} and references therein). In this paper we adopt the viewpoint of transformation groups and focus our attention on isometries of  Alexandrov spaces, inspired by the fact that in the Riemannian case the isometry group has been extensively studied (see, for example, \cite{Ko} and related bibliography).  In \cite{FY}, Fukaya and Yamaguchi showed that, as in the case of Riemannian manifolds (see \cite{MS}), the isometry group of an Alexandrov space is a Lie group. By a theorem of van Dantzig and van der Waerden \cite{vDvW} (cf.~Theorem~1.1 in \cite[ Chapter II]{Ko}), the isometry group of a compact Alexandrov space is compact.  The presence of an isometric action has been used to obtain further information on the structure of Alexandrov spaces. In  \cite{Ber}, Berestovski{\u\i} considered finite dimensional homogeneous metric spaces and showed that when they have a lower curvature bound, they are also smooth manifolds (see also \cite{BePl}). The structure of Alexandrov spaces of  cohomogeneity one and their classification in low dimensions appears in \cite{GaSe}. 

The main difference  between isometries of Alexandrov spaces and isometries of Riemannian manifolds is that the former must preserve the metric singularities of the space, i.e., they must map extremal sets to extremal sets, points with a given type of space of directions to others of the same type, etc. This not only imposes severe restrictions on the possible isometries but, reciprocally, the existence of isometries usually results in the appearance of additional structure in the singular sets of an Alexandrov space. 

The present paper explores this theme in several directions, extending various results on isometry groups of Riemannian manifolds to the Alexandrov case. We start with section~\ref{S:Prelims}, where we have collected some background material, along with Alexandrov versions of the Isotropy Lemma and the Principal Isotropy Theorem. In section~\ref{S:Bounds} we derive an upper bound on the dimension of the isometry group of an Alexandrov space of dimension $n$; this bound is the same as the one for Riemannian $n$-manifolds (cf.~\cite{Ko}), which is natural, since the existence of singular points in an Alexandrov space should mean fewer isometries than in the Riemannian case, as pointed out above. We make this more explicit in section~\ref{S:Rigidity}, where we prove that the maximal dimension for the isometry group of an Alexandrov space is only attained in the Riemannian case. The results in sections~\ref{S:Bounds} and \ref{S:Rigidity} were proved  for Riemannian orbifolds, a special class of Alexandrov spaces, in \cite{BZ}. 
 In section~\ref{S:bounds_extremal} we bound the dimension of the isometry group of an Alexandrov space when there are extremal subsets. In section~\ref{S:Gap_thms} we show  that the gaps in the possible dimensions of the isometry group of a Riemannian manifold derived in \cite{Ma} also occur for Alexandrov spaces.
  In section~\ref{S:Bdry} we bound the dimension of the isometry group of an Alexandrov space with boundary and classify these spaces up to homeomorphism when the isometry group has maximal dimension. This extends to the Alexandrov setting the work carried out in \cite{CSX} for Riemannian manifolds with boundary. In our case, there appears a non-manifold with boundary and isometry group of maximal dimension, namely, the cone over a real projective space. Finally, in section~\ref{S:Sym_spaces}, we study symmetric Alexandrov spaces following previous work of   Berestovski{\u\i} \cite{Be}.

% ACKNOWLEDGEMENTS

\begin{ack}
The first named author thanks the Department of Mathematics of the Universidad Aut\'onoma de Madrid for its hospitality while part of the work presented in this paper was carried out. Both authors would also like to thank Martin Weilandt and John Harvey for helpful comments on a first version of this paper.
\end{ack}

%----------------------------------------------------------
% SECTION: PRELIMINARIES
%----------------------------------------------------------

\section{Preliminaries}
\label{S:Prelims}

In this section we fix notation and collect some preliminary material that we will use in the rest of the paper. For the sake of completeness,  in subsection~\ref{SS:PIThm} we prove the Isotropy Lemma and the Principal Orbit Theorem for isometric Lie actions on Alexandrov spaces.  These results are well known in the smooth case (cf.~\cite{Gr2}).

% SUBSECTION: BACKGROUND AND NOTATION

\subsection{Background and notation}
Recall that a length space $(X,\mathrm{dist})$ of finite (Hausdorff) dimension is an \emph{Alexandrov space} (with curvature bounded below) if it has curvature bounded from below in the triangle comparison sense.
We refer the reader to \cite{BBI,BGP} for the main definitions and theorems pertaining to these spaces. Further developments can be found in \cite{Pet2}.

The primary source of technical difficulties in the study of Alexandrov spaces is usually due to the presence of singularities. However, given an Alexandrov space $X$, work of Otsu and Shioya \cite{OS}, and of Perelman \cite{Pe}, allows one to introduce a $C^0$ structure on a large open subset of $X$. More precisely,  call a point $p\in X$ \emph{regular}  if its space of directions $\Sigma_p$ is  isometric to the unit round sphere $\sphere^{n-1}$; otherwise, $p$ is called \emph{singular}. We will denote the set of regular points of $X$ by $R_X$, and the set of singular points by $S_X$, so that  $R_X=X\setminus S_X$.  The set $R_X$ is the intersection of countably many open and dense subsets of $X$  and is dense in $X$; any regular point has an open neighborhood $U$ and a map $\phi:U\to\RR^n$ that is a homeomorphism onto its image (cf.~\cite{OS,Pe}). In fact, there have been improvements on the smoothness of the structure (cf.~\cite{Ot}), although we shall omit them here, since they are not necessary in what follows.  Proving some of our statements in the Riemannian case requires using the differential of a map. Fortunately, this can also be defined for Alexandrov spaces (see, for example, \cite{Ly} for the more general statements).

Given  an Alexandrov space  $X$, we will denote its isometry group by $\Iso(X)$. The distance between two points $p,q\in X$  will be denoted by $|pq|$.  Given an isometric action $G\times X\to X$ of a group $G\leq \Iso(X)$,  we will denote the orbit of a point $p\in X$ by $G p$. The  isotropy group at $p$ will be denoted by $G_p$. 
Given a subset $A\subset X$, we will denote its image in $X/G$ under the orbit projection map $\pi:X\rightarrow X/G$ by $A^*$. Following this notation, we will denote the orbit space of the action $G\times X\rightarrow X$ by $X^*$, and a point in $X^*$ will be denoted by $p^*$, corresponding to the orbit $G p$. A useful fact about $\pi$ is that it is a \emph{submetry}, and as such admits horizontal lifts of geodesics from $X^*$ to $X$ (see, for instance, \cite{BeG}).

We will also use the fact that the natural action of $\Iso(X)$ on $X$ has closed orbits (see, for instance, \cite{KN}, section I.4). We will assume all actions to be effective and all Alexandrov spaces to be connected. We refer the reader to \cite{Br} for further background on the theory of transformation groups.

% SUBSECTION: ISOT LEMMA AND PPAL ORBIT THM

\subsection{Isotropy Lemma and Principal Orbit Theorem}
\label{SS:PIThm}

We now enunciate and prove Alexandrov versions of the Isotropy Lemma and the Principal Orbit  Theorem.  The main ideas follow  closely Lemma~1.3 and Theorem~1.4 in \cite{Gr2}, which are originally stated for the smooth case. We start with the  Isotropy Lemma, whose smooth version is due to Kleiner \cite{Kl}.

% LEM:  ISOTROPY LEMMA

\begin{lem}[Isotropy Lemma] Let $G$ be a Lie group acting isometrically on an Alexandrov space $X$. If $c:[0,d]\rightarrow X$ is a minimal geodesic between the orbits $Gc(0)$ and $Gc(d)$, then, for any $t\in (0,d)$, the isotropy group $G_{c(t)}=G_c$ is a subgroup of $G_{c(0)}$ and of $G_{c(d)}$.
\end{lem}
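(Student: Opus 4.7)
The plan is to use a splicing argument together with the non-branching of minimal geodesics in Alexandrov spaces. I will fix $t_0 \in (0, d)$ and $g \in G_{c(t_0)}$, and consider the spliced curve
\[
\alpha(s) = \begin{cases} c(s), & s \in [0, t_0], \\ g \cdot c(s), & s \in [t_0, d]. \end{cases}
\]
Continuity at $t_0$ will follow from $g c(t_0) = c(t_0)$, and since each half is a minimal geodesic, $\alpha$ will have total length $d$. Because $\alpha(0) = c(0) \in Gc(0)$, $\alpha(d) = g c(d) \in Gc(d)$, and $|Gc(0), Gc(d)| = d$ by hypothesis, the chain
\[
d = |Gc(0), Gc(d)| \leq |c(0), g c(d)| \leq \mathrm{length}(\alpha) = d
\]
will force $\alpha$ to be a minimal geodesic in $X$ from $c(0)$ to $g c(d)$.

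Next I would observe that $\alpha$ and $c$ are two minimal geodesics of the same length starting at $c(0)$ and coinciding on $[0, t_0]$. Invoking the non-branching of minimal geodesics in an Alexandrov space with curvature bounded below then forces $\alpha = c$ on all of $[0, d]$. This gives $g c(s) = c(s)$ for every $s \in [t_0, d]$, and in particular $g c(d) = c(d)$, so $g \in G_{c(d)}$. A symmetric splice $\beta$ --- equal to $g \cdot c$ on $[0, t_0]$ and to $c$ on $[t_0, d]$ --- combined with the same non-branching applied from the other endpoint, yields $g \in G_{c(0)}$ and $g c(s) = c(s)$ for $s \in [0, t_0]$. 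Thus $g$ will fix every point of $c$, which gives both $G_{c(t_0)} \leq G_{c(0)} \cap G_{c(d)}$ and the constancy $G_{c(t_0)} = G_{c(s)} =: G_c$ for every interior $s$ (by running the same argument with $s$ in place of $t_0$).

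The main obstacle is the non-branching step, which is the Alexandrov-specific input that replaces the uniqueness of geodesic extension available in the smooth Riemannian case. In a general metric space, two minimal geodesics with a common initial segment can split apart, so the identification $\alpha = c$ genuinely uses the lower curvature bound; this is the standard fact that minimal geodesics in Alexandrov spaces do not branch. Granting it, the remainder of the argument is a length comparison and the triangle inequality.
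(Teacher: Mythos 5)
Your proposal is correct and follows essentially the same route as the paper: both splice $c|_{[0,t]}$ with $g\cdot c|_{[t,d]}$, note that the resulting curve still realizes the distance between the orbits and is therefore a minimal geodesic, and conclude from the non-branching of geodesics in Alexandrov spaces (the paper phrases this as a contradiction via ``bifurcation of geodesics at $c(t)$''). Your write-up is merely more explicit about the length chain, the symmetric splice for the other endpoint, and the constancy of $G_{c(s)}$ on the interior.
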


\begin{proof}  Fix $t\in (0,d)$ and let $g\in G_{c(t)}$. Proceeding by contradiction, suppose that $g\notin G_{c(d)}$. The image of the minimal geodesic segment $c([t,d])$ under $g$ is a horizontal minimal geodesic segment from $gc(t)=c(t)$ to $gc(d)\neq c(d)$; it follows that the curve
$c[0,t]$ followed by $gc[t,d]$ still realizes the distance between the orbits $Gc(0)$ and $Gc(d)$, hence it is a geodesic. This is impossible, since we would have bifurcation of geodesics at $c(t)$
 (see Figure~\ref{F:isot_lemma}).  
\end{proof}

% THM: PRINCIPAL ORBIT TYPE THM

\begin{thm}[Principal Orbit  Theorem]  Let $G$ be a compact Lie group acting isometrically on an $n$-dimensional Alexandrov space $X$. Then there is a unique maximal orbit type and the orbits with maximal orbit type, the so-called principal orbits, form an open and dense  subset of $X$.
\end{thm}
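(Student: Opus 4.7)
The plan is to adapt the standard argument from the smooth case (cf.\ Theorem~1.4 in \cite{Gr2}) using the Isotropy Lemma just proved, together with a local slice statement available for isometric Lie group actions on Alexandrov spaces.

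The first ingredient I would need is the following slice-type local structure: for each $p\in X$ there is a neighborhood $V$ of $p$ such that every $q\in V$ has isotropy $G_q$ conjugate in $G$ to a subgroup of $G_p$. This is the expected Alexandrov analogue of the smooth slice theorem for isometric actions; it can be obtained from the fact that the $G_p$-action on $\Sigma_p$ is isometric and a small metric ball around $p$ is $G_p$-equivariantly modelled on a neighborhood of the vertex of the tangent cone at $p$.

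Given this, I would pick a point $p\in X$ whose isotropy is \emph{locally minimal}, meaning that no point $q$ in a neighborhood of $p$ has $G_q$ properly conjugate to a subgroup of $G_p$. Such a $p$ exists by successively passing to nearby points with strictly smaller isotropy (first in dimension, then in number of components at that dimension), a descent that must terminate since $G$ is a compact Lie group. Now for any $q$ in $V$ off the orbit $Gp$, take a minimizing geodesic $c:[0,d]\to X$ between $Gp$ and $q$, with $c(0)\in Gp$ and $c(d)=q$. The Isotropy Lemma yields $G_{c(t)}\subseteq G_{c(0)}\cap G_q$ for $t\in(0,d)$, so $G_{c(t)}$ is conjugate to a subgroup of $G_p$; local minimality then forces $G_{c(t)}$ to be conjugate to $G_p$ itself, so a conjugate of $G_p$ sits inside $G_q$. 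Combined with the slice step, which gives $G_q$ conjugate to a subgroup of $G_p$, mutual subconjugacy of compact subgroups of a compact Lie group forces $G_q$ conjugate to $G_p$; hence the orbit type is locally constant on a neighborhood of $p$.

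Finally I would globalize: the set $X_{(G_p)}$ of points with isotropy conjugate to $G_p$ is open by applying the preceding local-constancy at each of its points, while density plus uniqueness of the principal orbit type follow from connectedness of $X$, since every point has a locally minimal isotropy point in any slice neighborhood and distinct ``locally principal'' open sets cannot partition $X$. The main obstacle is the slice step itself, which must be established in the Alexandrov category rather than imported from the smooth slice theorem; once it is in hand, the Isotropy Lemma together with the compact Lie group fact that mutual subconjugacy implies conjugacy takes care of the rest.
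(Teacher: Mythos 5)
Your overall strategy --- isolate a minimal isotropy type, use the Isotropy Lemma to push a conjugate of it into every other isotropy group, and use a slice statement for the reverse subconjugacy plus the fact that mutually subconjugate compact subgroups of a compact Lie group are conjugate --- is close to the paper's. But note that the step you single out as the main obstacle is not an obstacle at all: the slice statement you need (every $q$ near $p$ has $G_q$ subconjugate to $G_p$) holds for \emph{any} compact Lie group acting on a completely regular space by the Mostow--Palais slice theorem, and this is precisely what the paper invokes, via \cite[Cor.~II.5.5]{Br}, to prove openness of the principal stratum. No Alexandrov-specific construction from the tangent cone is needed.

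The genuine gap is in your globalization. The paper sidesteps your ``locally minimal isotropy'' descent by choosing $H_0$ to be an isotropy group of \emph{globally} least dimension and, among those, least number of connected components. Then, for an arbitrary $p$, the Isotropy Lemma applied to a minimal geodesic $c$ between the orbits $Gp_0$ and $Gp$ gives $G_{c(t)}\subseteq G_{c(0)}=H_0$, global minimality forces $G_{c(t)}=H_0$, and $G_{c(t)}\subseteq G_p$ then places a conjugate of $H_0$ inside every isotropy group; uniqueness of the maximal orbit type and density of its stratum follow in one stroke. Your version instead produces, a priori, one ``locally principal'' type per region, and your closing claim that distinct locally principal open sets ``cannot partition $X$'' does not prove uniqueness: these sets only cover an open \emph{dense} subset of $X$, and an open dense subset can perfectly well be disconnected, so connectedness of $X$ by itself does not identify the types. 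The gap is fillable with tools you already have --- join two locally minimal points $p_1$ and $p_2$ by a minimal geodesic between their orbits and use that the interior isotropy $G_c$ is constant, conjugate to $G_{p_1}$ near one endpoint and to $G_{p_2}$ near the other --- but as written the uniqueness assertion is unsupported, and the global extremal choice of $H_0$ is the cleaner route.
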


% PROOF

\begin{proof}

% FIGURE

\begin{figure}
\psfrag{A}{$c(0)$}
\psfrag{B}{$c(d)$}
\psfrag{C}{$c(t)$}
\psfrag{D}{$g c(0)$}
\psfrag{E}{$g c(d)$}
\psfrag{F}{$G c(0)$}
\psfrag{G}{$G c(d)$}
\includegraphics[scale=1.0]{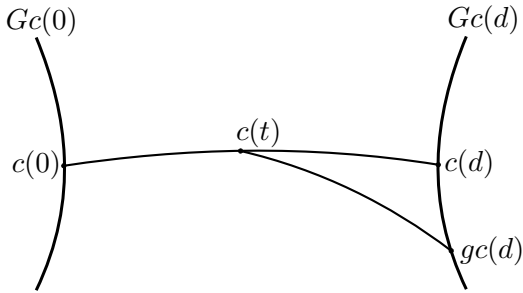}
\caption{Proof of the Isotropy Lemma}\label{F:isot_lemma}
\end{figure}

Among the isotropy groups of least dimension, let $H_0$ be one with the least number of connected components.
Such an $H_0$ exists because isotropy groups are compact. Let $p_0\in X$ be a point with isotropy group $H_0$. We claim that $H_0$ corresponds to a maximal orbit type. To see this, fix $p\in X$ distinct from $p_0$ and let $c:[0,1]\to X$ be a minimal geodesic between the orbits $G p_0$ and $G p$, with $c(0)=p_0$ and $c(1)=p$. Observe that the Isotropy Lemma and the choice of $H_0$ imply that $G_{c(t)}=H_0$ for $t\in [0,1)$. Thus, $H_0$ (or one of its conjugates) is a subgroup of  the isotropy group of any other point in $X$, and therefore the orbit through $p_0$ is a maximal orbit

Denote by $X_{(H_0)}$ the set of maximal orbits. Since any point $q\in X$ is the endpoint of a minimal geodesic between orbits starting at the orbit through $p_0$, it is clear that $X_{(H_0)}$ is dense in $X$. It remains to show that $X_{(H_0)}$ is open. This follows from the fact that, given an orbit $P\in X_{(H_0)}$, there exists a neighborhood of  $P$ such that for any other orbit $Q$ in this neighborhood $\mathrm{type}(Q)\geq \mathrm{type}(P)$ (cf.~\cite[Cor. II.5.5]{Br}). Since the orbit type of $P$ is maximal, the conclusion follows.

\end{proof}

%---------------------------------------------
% SECTION: DIMENSION BOUND
%---------------------------------------------

\section{Dimension bound}
\label{S:Bounds}

We start by obtaining upper bounds for the dimension of the isometry group of an Alexandrov space.

% THM: DIMENSION BOUND

\begin{thm}\label{dimension_bound}
Let $X$ be an Alexandrov space of dimension $n$. Then the dimension of its isometry group $G$ is at most $n(n+1)/2$. 
\end{thm}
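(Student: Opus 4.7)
I follow the classical Riemannian template: choose a point $p\in X$ with the best possible local structure and decompose $\dim G=\dim(Gp)+\dim G_p$, bounding each summand separately. Since the regular set $R_X$ is dense in $X$, pick $p\in R_X$, so that $\Sigma_p$ is isometric to $\sphere^{n-1}$ and $\Iso(\Sigma_p)\cong O(n)$. Each $g\in G_p$ carries geodesics based at $p$ to geodesics based at $p$ and thereby induces an isometry of $\Sigma_p$; this yields a continuous (hence smooth) homomorphism $\rho\colon G_p\to O(n)$.

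The estimate for the orbit is straightforward. The orbit $Gp$ is homeomorphic to the homogeneous space $G/G_p$, whose dimension is $\dim G-\dim G_p$; as a subset of the $n$-dimensional Alexandrov space $X$, its topological dimension is at most $n$, so $\dim G-\dim G_p\le n$. Combined with the bound $\dim G_p\le\dim O(n)=n(n-1)/2$, which holds as soon as $\rho$ is injective, this gives
\[
\dim G\le n+\tfrac{n(n-1)}{2}=\tfrac{n(n+1)}{2}.
\]

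The heart of the proof is injectivity of $\rho$. Suppose $g\in\ker\rho$. Because $p$ is regular, a small ball around $p$ is swept out by short geodesics from $p$, and the non-branching property of Alexandrov geodesics guarantees that such a geodesic is determined by its initial direction. Since $g$ fixes every direction in $\Sigma_p$, it fixes each such geodesic pointwise, hence a whole neighborhood $U$ of $p$. To propagate this, consider
\[
F=\{x\in X: g(x)=x \text{ and } g \text{ acts trivially on } \Sigma_x\}.
\]
Then $F$ contains $U$; it is closed by continuity of $g$ and of its derivative action on tangent cones; and it is open, since if $g$ fixes a neighborhood of $x$, then the directions from $x$ to nearby fixed points are fixed and are dense in $\Sigma_x$. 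Connectedness of $X$ forces $F=X$, so $g=\Id$.

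The step I expect to be the main obstacle is precisely this rigidity at a regular point. Unlike the smooth case, one has no globally defined exponential map, so the transition from \emph{trivial action on $\Sigma_p$} to \emph{trivial action on a neighborhood of $p$} must invoke the local Euclidean structure available at regular points together with the non-branching of Alexandrov geodesics. Once the local rigidity is in hand, the open-closed argument propagates it across $X$ without further difficulty.
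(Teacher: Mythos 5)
Your proof is correct and follows essentially the same route as the paper: the decomposition $\dim G=\dim(Gp)+\dim G_p$ at a regular point, the bound $\dim(Gp)\le n$, and the faithfulness of the isotropy representation on $\Sigma_p$ established by exactly the paper's key lemma (an isometry fixing $p$ and acting trivially on $\Sigma_p$ is the identity, propagated via uniqueness of geodesics with a given initial direction). The only cosmetic differences are that you bound $\dim G_p$ directly by $\dim O(n)$ at the regular point instead of running the paper's induction on dimension, and you bound the orbit dimension by general dimension theory rather than by the paper's explicit chart-and-local-section construction.
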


% PROOF

\begin{proof}
We will use induction on the dimension of $X$. There is clearly nothing to prove in  the case $n=0$, so we will assume that the theorem holds for all positive integers less than $n$. 

% LEM

\begin{lem}\label{L:identity} Let $X$ be a connected Alexandrov space and $f:X\rightarrow X$ an isometry. If there is some $p_0\in X$ such that $f(p_0)=p_0$ and $df_{p_{0}}:\Sigma_{p_0}\rightarrow \Sigma_{p_0}$ is the identity, then $f(p)=p$ for all $p$ in $X$.
\end{lem}

% PROOF OF LEM
\begin{proof}[Proof of lemma~\ref{L:identity}]
It suffices to prove that the set of fixed points of $f$ is open in $X$. This follows from observing that $f(\exp_p(t v))=\exp_{f(p)}(t df_p(v))$, whenever both sides are defined. Since any point of $X$ can be connected to $p$ by a shortest geodesic, the set
\[
\{\, q\in X : q=\exp_p(tv)\text{ for some $v\in \Sigma_p$, $t\geq 0$}\,\}
\]
 contains an open neighborhood of $p$. Therefore, since $f(p)=p$ and $df_p=\Id$, the isometry $f$ must fix that same open neighborhood.  
\end{proof}

The main consequence of the lemma is that for any $p\in X$, the isotropy group $G_{p}$ acts faithfully by isometries on $\Sigma_{p}$.  It follows from the induction hypothesis that  $\dim G_{p}\leq \dim \Iso(\Sigma_{p})\leq n(n-1)/2$.

 Choose now a  regular point $p_0\in R_X$. We define the \emph{Myers-Steenrod map} $F:G\rightarrow X$ as  $F(g)=g\cdot p_0$. Clearly,  $F$ is continuous, because the topology of $G$ agrees with the compact-open topology; 
also, $F(G)\subset R_X$, since each isometry of $X$ maps regular points to regular points and singular points to singular points.

Let   $X^{ob}\supseteq R_X$ be  the set of points in $X$ for which there are at least $n$ directions making obtuse angles among them (cf.~\cite{Pe}). Fix some  $g_0\in G$ and take a chart $(U,\varphi)$ of $X^{ob}$ around $g_0\cdot p_0$. Observe that $\varphi:U\rightarrow \mathbb{R}^n$ is a homeomorphism on $R_X\cap U$ for the  structure introduced in \cite{OS} or \cite{Pe}.  
Now choose  open neighborhoods  $V$, $W$ around $g_0$ in $G$ such that $\overline{V}\subset W$ is compact  and with $\varphi\circ F$ defined on $W$. 
Since the quotient map $\pi:G\to G/G_{p_0}$ is a fiber bundle, we can assume, reducing the size of $W$ if necessary, that there is a section $s: \widetilde{W}\subseteq G/G_{p_0}\longrightarrow W$ with $\pi(W)=\widetilde{W}$ and $s([g_0])=g_0$.

Composing the maps $s$, $F$ and $\varphi$ results in a map $\tilde{F}:\widetilde{W}\longrightarrow \mathbb{R}^n$,  given explicitly by 
\[
\tilde{F}([g])=\varphi(s[g]\cdot p_0),
\]
which is clearly continuous and injective.

Finally, observe that $\pi(\overline{V})$ is compact,  $\tilde{F}\lvert_{\pi(\overline{V})}$ is continuous and injective, and $\mathbb{R}^n$ is Hausdorff. It follows that  the map
\[
\tilde{F}\lvert_{\pi(\overline{V})}:\pi(\overline{V})\rightarrow \tilde{F}(\pi(\overline{V}))\subseteq \mathbb{R}^n
\]
is a homeomorphism onto a subspace of $\mathbb{R}^n$. Hence $\dim \pi(\overline{V})\leq \dim\mathbb{R}^n=n$, and
\begin{equation}\label{dimension_sum}
\dim G=\dim \pi(\overline{V})+\dim G_{p_0} \leq n+\frac{n(n-1)}{2}=\frac{n(n+1)}{2}.
\end{equation}

\end{proof}

%--------------------------------------
% SECTION: RIGIDITY
%--------------------------------------

\section{Rigidity}
\label{S:Rigidity}
In this section we consider the case in which the dimension of the isometry group attains its maximal possible value.

% THM: Rigidity

\begin{thm}
\label{T:Rigidity}
Let $X$ be an $n$-dimensional Alexandrov space. If the dimension of the isometry group of $X$ is $n(n+1)/2$, then $X$ is isometric to one of the following space forms:
\vspace{.1in}
\begin{enumerate}
	\item An $n$-dimensional Euclidean space $\mathbb{R}^n$. \vspace{.1in}
	\item An $n$-dimensional sphere $\mathbb{S}^n$.\vspace{.1in}
	\item An $n$-dimensional projective space $\mathbb{R}P^n$.\vspace{.1in}
	\item An $n$-dimensional simply connected hyperbolic space $\mathbb{H}^n$.\vspace{.1in}
\end{enumerate}
\end{thm}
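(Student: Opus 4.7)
The plan is to trace through the equality case in the proof of Theorem~\ref{dimension_bound} to deduce that $G=\Iso(X)$ acts transitively on $X$, then invoke Berestovski{\u\i}'s theorem to identify $X$ as a smooth Riemannian manifold, and finally appeal to the classical Riemannian rigidity theorem for isometry groups of maximal dimension.

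First I would examine the chain of inequalities in (\ref{dimension_sum}). Equality $\dim G=n(n+1)/2$ forces both $\dim G_{p_0}=n(n-1)/2$ and $\dim \pi(\overline V)=n$ for a regular point $p_0\in R_X$, so that the orbit $G\cdot p_0$, identified with the smooth manifold $G/G_{p_0}$ via the orbit map, has dimension $n$. Since the orbit map $G/G_{p_0}\to X$ is a continuous injection of an $n$-manifold into $X$, and near $p_0$ the space $X$ is locally Euclidean of dimension $n$ by the results of Otsu--Shioya and Perelman recalled in Section~\ref{S:Prelims}, invariance of domain implies that $G\cdot p_0$ is open in $X$. As orbits of isometric actions on Alexandrov spaces are closed, $G\cdot p_0$ is both open and closed in the connected space $X$, whence $G$ acts transitively.

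Next, $X$ being a finite-dimensional homogeneous metric space with a lower curvature bound, Berestovski{\u\i}'s theorem \cite{Ber} implies that $X$ is a smooth Riemannian manifold (with the original length metric). Now $X$ is a Riemannian $n$-manifold whose full isometry group realizes the maximum dimension $n(n+1)/2$. The classical rigidity result for Riemannian manifolds (cf.~Theorem~3.1 in \cite[Chapter II]{Ko}) then forces $X$ to have constant sectional curvature, and the standard classification of complete, connected Riemannian manifolds of constant curvature whose full isometry group attains the maximum yields exactly the four possibilities $\mathbb{R}^n$, $\mathbb{S}^n$, $\mathbb{RP}^n$, and $\mathbb{H}^n$, since any other space form of constant curvature has strictly smaller isometry group.

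The main obstacle is the first step: proving cleanly that the orbit through a regular point is open in $X$. This requires carefully combining the injective, continuous nature of the orbit map $G/G_{p_0}\to X$ with invariance of domain, using that $R_X$ is locally $\mathbb{R}^n$ near $p_0$ via the charts constructed in \cite{OS, Pe}. Once transitivity is secured, the rest of the argument is an application of results already in the literature: Berestovski{\u\i}'s smoothing theorem handles the passage to the Riemannian category, and the classical theorems of Wang--Kobayashi dispose of the classification.
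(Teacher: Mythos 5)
Your proposal is correct and follows essentially the same route as the paper: both force $\dim G_{p_0}=n(n-1)/2$ and an $n$-dimensional orbit from equality in \eqref{dimension_sum}, deduce that the orbit of a regular point is open (the paper's Lemma~\ref{L:onto} is exactly your invariance-of-domain step, carried out in the Otsu--Shioya/Perelman chart), conclude transitivity from connectedness, and then apply Berestovski{\u\i}'s theorem followed by Kobayashi's classification. The only cosmetic difference is that the paper also records separately that $S_X=\emptyset$ (Lemma~\ref{L:empty}), which your argument obtains implicitly once homogeneity is established.
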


% PROOF

\begin{proof} 
It is enough to show that $X$ is isometric to a Riemannian manifold, since  the conclusion of the theorem holds in the Riemannian case (see, for instance, Theorem 3.1 in \cite{Ko}, p. 46). As in the the previous section, we proceed by induction, and we will continue to use the set up and notation introduced in the proof of Theorem~\ref{dimension_bound}. Clearly, the starting case is trivial and hence we assume that the theorem holds for every positive integer less than $n$.

Assume now that equality holds in equation~\ref{dimension_sum}, so that $\dim G_{p_0}=n(n-1)/2$ and $\dim \pi(\overline{V})=n$. Since $\Sigma_{p_0}$ is isometric to the unit round sphere $\mathbb{S}^{n-1}$, then $\dim G_{p_0}=n(n-1)/2$ implies that, up to finite factors, $G_{p_0}=\mathrm{SO}(n)$. We need now the following two lemmas: 

% LEM: S_X is empty.

\begin{lem}\label{L:empty}
The set of singular points $S_X$ is empty. 
\end{lem}

% PROOF

\begin{proof}[Proof of Lemma \ref{L:empty}]
 Let $q$ be a point in $S_X$. Since $G_{p_0}$ acts transitively on $\Sigma_{p_0}$, the metric sphere $S_{p_0}(|p_0q|)$ is entirely composed of singular points. 
Let $p_1\not\in\overline{B_{p_0}(|p_0q|)}$ be a regular point. A geodesic from $p_0$ to $p_1$ must intersect $S_{p_0}(|p_0q|)$ at some singular point.  This contradicts the fact that regular points in an Alexandrov space form a convex set (see \cite{Pet}); hence $X=\overline{B_{p_0}(|p_0q|)}$. In fact, we could assume  $q$ to be the singular point closest to $p_0$ (and therefore we have shown that in our situation such singular points can not be dense). 
Since $p_0$ was an arbitrary regular point in $X$ and these form a dense subset of $X$, we conclude that $S_X$ is empty. 
\end{proof}

% LEM: MS map is surjective.

\begin{lem}\label{L:onto}
In the above situation, the Myers-Steenrod map $F:G\longrightarrow X$ is surjective.
\end{lem}

% PROOF

\begin{proof}[Proof of Lemma \ref{L:onto}]
We will first show that $F:G\longrightarrow R_X$ is an open map. Consider the following  commutative diagram:
\[
\xymatrix{
W\ar[r]^F & U \ar[d]^{\phi} \\
\pi(\overline{V}) \ar[r]^{\tilde{F}} \ar[u]^{s} & \RR^n
}
\]
where $U$, $V$ and $W$ are defined as in the proof of Theorem \ref{dimension_bound}.
We have $\dim \pi(\overline{V})=n$ and $\pi(\overline{V})$ has nonempty interior. Since $\tilde{F}$ is a homeomorphism onto its image, $\tilde{F}(\inte \pi(\overline{V}))$ is an open subset of dimension $n$ in $\tilde{F}(\pi(\overline{V}))\subseteq \RR^n$. Hence   $\img(\tilde{F})$ is open in $\RR^n$ and $\img (F) =G p_0$ is open in $X$.
Since $X$ is connected, and the orbits of $G$ give a partition of $X$, it follows that there can exist only one orbit. Therefore, $X=G p_0$. 
\end{proof}

Lemmas~\ref{L:empty} and \ref{L:onto} imply that $X$ is a homogeneous metric space with a lower curvature bound. It then follows from a result of  Berestovski{\u\i} \cite[Theorem~7]{Ber}  that $X$ is isometric to a Riemannian manifold, as claimed. This ends the proof of the main theorem in this section.

\end{proof}

% REMARK

\begin{rem}
To prove Theorem~\ref{T:Rigidity} one could also show first that $\Iso(X)$ acts transitively on the regular set $R_X$ and then use that the orbits of the action of the isometry group are closed sets. Nevertheless, the authors did not see a significant advantage in proceeding in this way. 
\end{rem}

%---------------------------------------------------------------------------------------------------------------------------------
% SECTION: MAXIMAL DIMENSION IN THE PRESENCE OF EXTREMAL SETS
%---------------------------------------------------------------------------------------------------------------------------------

\section{Dimension bound in the presence of extremal sets}
\label{S:bounds_extremal}
If an Alexandrov space contains extremal sets, then any isometry must preserve these, thus having restricted its possible image. As pointed out in the introduction, this has consequences for the dimension of the isometry group. We make this explicit in the next theorem.

% THM

\begin{thm}
\label{T:Bound_ext_set} Let $X$ be an $n$-dimensional Alexandrov space. If $X$ contains a $k$-dimensional connected extremal subset $E$, then the dimension of the isometry group of $X$ cannot exceed 
$$
{k+1 \choose 2} + {n-k \choose 2}.
$$
\end{thm}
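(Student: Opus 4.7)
The plan is to split $G = \Iso(X)$ using the restriction homomorphism $\rho : G \to \Iso(E)$, which is well defined because every ambient isometry preserves each extremal subset of $X$ and induces an intrinsic isometry of $E$. Writing $K = \ker \rho$ for the subgroup of isometries fixing $E$ pointwise, one has $\dim G = \dim(G/K) + \dim K$, so it suffices to establish the two bounds $\dim(G/K) \leq \binom{k+1}{2}$ and $\dim K \leq \binom{n-k}{2}$ separately.

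For the first bound, I would invoke the structural theory of extremal sets of Perelman and Petrunin to regard $E$, equipped with its intrinsic length metric, as an Alexandrov space of dimension $k$; ambient isometries restrict to intrinsic isometries of $E$, and $G/K$ embeds continuously in $\Iso(E)$, so Theorem \ref{dimension_bound} applied in dimension $k$ gives the estimate.

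For the second bound I would argue by induction on $k$, with $n$ allowed to vary, proving the sharper statement that whenever a group acts by isometries on an $n$-dimensional Alexandrov space $Y$ and fixes a $k$-dimensional extremal subset $F \subset Y$ pointwise, its dimension is at most $\binom{n-k}{2}$. The base case $k = 0$ is immediate from Lemma \ref{L:identity} combined with Theorem \ref{dimension_bound}: at any $p \in F$ the differential representation into $\Iso(\Sigma_p)$ is faithful, and $\Sigma_p$ has dimension $n-1$, yielding the bound $\binom{n}{2}$. For the inductive step, pick $p \in F$ at which $\dim \Sigma_p F = k-1$; the same faithful embedding lands in the subgroup of $\Iso(\Sigma_p)$ fixing $\Sigma_p F$ pointwise, since every element of the group fixes $F$ pointwise and hence its differential fixes every direction at $p$ represented by a minimizing geodesic into $F$. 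Applying the inductive hypothesis to $\Sigma_p$ (dimension $n-1$) and its $(k-1)$-dimensional extremal subset $\Sigma_p F$ closes the induction, producing $\binom{(n-1)-(k-1)}{2} = \binom{n-k}{2}$.

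The principal obstacle is the structural input required from Perelman-Petrunin theory: one needs that $E$ with its intrinsic metric is Alexandrov of dimension $k$, and that at a dense set of points $p \in E$ the space of directions $\Sigma_p E$ is itself an extremal subset of $\Sigma_p$ of dimension $k-1$. Once these are granted, the whole argument is a clean iteration of Theorem \ref{dimension_bound} and Lemma \ref{L:identity}, mirroring the classical Riemannian split between the tangential and normal contributions at a totally geodesic submanifold.
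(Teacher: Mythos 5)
There is a genuine gap, and it sits exactly where you flag the ``principal obstacle'': your bound $\dim(G/K)\leq\binom{k+1}{2}$ rests on the claim that $E$ with its induced intrinsic metric is a $k$-dimensional Alexandrov space, and this is \emph{not} part of the Perelman--Petrunin structure theory --- it is a well-known open problem (already for the special case $E=\partial X$; the paper itself remarks in Section~\ref{S:Bdry} that ``it is not known whether the boundary of an Alexandrov space is also an Alexandrov space''). What Perelman--Petrunin do give you is the generalized Liberman theorem, the Hausdorff dimension of $E$, and the fact that $\Sigma_pE$ is an extremal subset of $\Sigma_p$ of dimension $k-1$; none of this lets you apply Theorem~\ref{dimension_bound} to $E$ itself. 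So the first half of your decomposition cannot be carried out as stated. There is also a secondary soft spot in the second half: an isometry fixing $F$ pointwise permutes the (possibly non-unique) minimal geodesics from $p$ to a point $q\in F$, so its differential a priori only preserves $\Sigma_pF$ setwise; your induction needs it to fix $\Sigma_pF$ pointwise, which requires an additional argument.

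The paper avoids both issues by not splitting off $\Iso(E)$ at all. It runs a single induction on $k$: the orbit $Gp$ of a point $p\in E$ is contained in $E$, hence has dimension at most $k$ (only the Hausdorff dimension of $E$ as a subset is used, no intrinsic Alexandrov structure); the isotropy $G_p$ acts faithfully on $\Sigma_p$ by Lemma~\ref{L:identity}, and $\Sigma_p$ is an $(n-1)$-dimensional Alexandrov space containing the $(k-1)$-dimensional extremal subset $\Sigma_pE$, so the inductive hypothesis gives $\dim G_p\leq\binom{k}{2}+\binom{n-k}{2}$ and therefore $\dim G\leq k+\binom{k}{2}+\binom{n-k}{2}=\binom{k+1}{2}+\binom{n-k}{2}$. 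Note that this only requires $G_p$ to preserve the extremal subset $\Sigma_pE$, not to fix it pointwise, so the second difficulty above also disappears. If you want to salvage your two-step split, you would have to replace the appeal to ``$E$ is Alexandrov'' by an orbit--isotropy count inside $X$ --- at which point you have essentially reconstructed the paper's argument.
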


% PROOF

\begin{proof}
We will use reverse induction on the dimension $k$. Naturally, if $k=0$,  then $E$ is an isolated point $p$. Any element in the connected component of the identity of $\Iso(X)$ must preserve $p$, and hence its differential induces an isometry on $\Sigma_p$. Since the action of $\Iso(X)_p$ is faithful on $\Sigma_p$, it follows from Theorem~\ref{dimension_bound} that $\dim\Iso(X)\leq n(n-1)/2$, as desired.

Assume now that the theorem is true for any Alexandrov space of dimension $n$ or less, and for all extremal subsets of dimension up to $k-1$. Any element in the connected component of the identity of $\Iso(X)$ must map $E$ to points of $E$, and thus the orbit of some $p\in E$ has dimension less than or equal to $k$. On the other hand, elements in $\Iso(X)_p$ act faithfully on $\Sigma_p$ and it is well known that the set of tangent directions to $E$ forms an extremal subset of $\Sigma_p$ of dimension $k-1$ (cf.~\cite{PP}). Hence, by the induction hypothesis,
$$
\dim \Iso_p\leq {k \choose 2} + {n-k \choose 2}
$$ 
and 
$$
\dim\Iso (X)\leq k+\dim\Iso_p(X)\leq {k+1 \choose 2} + {n-k \choose 2}.
$$
\end{proof}
 
The bounds given in the preceding theorem are optimal, as the following example shows. 

% EXAMPLE: BOUNDS ARE OPTIMAL

\begin{example}
Let $X$ be the Alexandrov space obtained from gluing in $\RR^{n+1}$ a cone with vertex $p_0$ with a hemisphere along their boundaries. Both spaces have nonnegative sectional curvature, thus $X$ has the same lower curvature bound. If the cone is taken with angle less than $\pi/2$, then its vertex $p_0$ is an extremal set in $X$. Since $X$ has spherical symmetry about its axis, its isometry group is $O(n)$, thus proving that the bound can not be improved when $k=0$. For higher $k$ take the product of this example with $\RR^k$, and let $E$ be $\{p_0\}\times \RR^k$. Clearly the isometry group of the whole space is $O(k)\times O(n)$, which corresponds to the bound given in Theorem~\ref{T:Bound_ext_set} once dimensions are readjusted.
\end{example}

%-------------------------------------------
% SECTION: GAP THEOREMS
%-------------------------------------------

\section{Dimension gaps}
\label{S:Gap_thms}

We generalize to Alexandrov spaces Theorems~3.2 and 3.3 in \cite[section II.3]{Ko}. The first theorem below shows that, as in the Riemannian case, there exists a gap in the possible dimensions of the isometry group of an Alexandrov space $X$ of dimension $n\neq 4$. The second theorem shows that $X$ is isometric to a Riemannian manifold when $\Iso(X)$ has the next possible largest dimension. As pointed out in \cite{Ko}, the techniques used in the proof of the Riemannian versions of these theorems do not work when $n=4$, due to the fact that $\mathrm{SO}(4)$ is not simple, and thus this case must be considered separately (cf.~\cite{Is}). 
Mann \cite{Ma} showed that, for Riemannian manifolds, there is a more general gap phenomenon which includes as a particular case Theorem~3.2 in \cite{Ko}. The third theorem in this section shows that this phenomenon also occurs for Alexandrov spaces.  

% THM: GAPS

\begin{thm}
\label{T:gaps}
Let $X^n$ be an Alexandrov space of dimension $n\neq 4$. Then $\Iso(X)$ contains no closed subgroups of dimension $m$ for 
\[
\frac{1}{2}n(n-1)+1<m<\frac{1}{2}n(n+1).
\]
\end{thm}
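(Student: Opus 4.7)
The plan is to show that any closed subgroup $G\leq\Iso(X)$ with $m:=\dim G>\frac{1}{2}n(n-1)+1$ must in fact satisfy $\dim G=\frac{1}{2}n(n+1)$, so that no value of $\dim G$ can lie in the prescribed open interval. Fix a regular point $p_{0}\in R_{X}$, so that $\Sigma_{p_{0}}\cong\sphere^{n-1}$. By Lemma~\ref{L:identity}, the isotropy $G_{p_{0}}$ acts faithfully on $\Sigma_{p_{0}}$ and hence embeds as a closed subgroup of $\Iso(\sphere^{n-1})=O(n)$. Combining the dimension identity $\dim G=\dim Gp_{0}+\dim G_{p_{0}}$ with $\dim Gp_{0}\leq n$ gives
\[
\dim G_{p_{0}}>\frac{1}{2}n(n-1)+1-n=\frac{1}{2}(n-1)(n-2).
\]

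Next I invoke a classical gap theorem for closed subgroups of the orthogonal group (see the discussion in \cite[Chapter~II]{Ko} and \cite{Ma}): for $n\neq 4$, every closed subgroup of $O(n)$ whose dimension strictly exceeds $\frac{1}{2}(n-1)(n-2)$ must contain $\SO(n)$. This is precisely where the hypothesis $n\neq 4$ enters, since $\SO(4)$ has proper closed subgroups (for instance $\mathrm{U}(2)$) whose dimension exceeds $\frac{1}{2}(n-1)(n-2)=3$. Applying this fact to $G_{p_{0}}\leq O(n)$, we obtain $G_{p_{0}}^{0}=\SO(n)$, so $\dim G_{p_{0}}=\frac{1}{2}n(n-1)$ and $G_{p_{0}}$ acts transitively on $\Sigma_{p_{0}}$.

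To conclude, I must show that $\dim Gp_{0}=n$. The set of directions at $p_{0}$ tangent to the orbit $Gp_{0}$ is a closed, $G_{p_{0}}$-invariant subset of $\Sigma_{p_{0}}\cong\sphere^{n-1}$; since $\SO(n)$ acts transitively on $\sphere^{n-1}$ this subset is either empty or all of $\Sigma_{p_{0}}$. If it is empty, then $p_{0}$ is fixed by the identity component $G^{0}$, whence $\dim G=\dim G_{p_{0}}=\frac{1}{2}n(n-1)$, contradicting $m>\frac{1}{2}n(n-1)+1$. Otherwise, the orbit $Gp_{0}$ contains geodesic segments emanating from $p_{0}$ in every direction, so $\dim Gp_{0}=n$; plugging back into the dimension identity gives $\dim G=\frac{1}{2}n(n+1)$, contradicting $m<\frac{1}{2}n(n+1)$. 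The hard step is this last one: justifying rigorously that the set of orbit directions at $p_{0}$ is well-defined, closed, and $G_{p_{0}}$-invariant in the Alexandrov setting, so that $\SO(n)$-transitivity on $\Sigma_{p_{0}}$ forces the orbit to be locally $n$-dimensional at $p_{0}$. An alternative route that avoids tangent cones is to adapt the arguments of Lemmas~\ref{L:empty} and \ref{L:onto}, exploiting $G_{p_{0}}$-transitivity on $\Sigma_{p_{0}}$ together with closedness of orbits to conclude $Gp_{0}=X$ directly, after which the application of Berestovski{\u\i}'s theorem \cite[Theorem~7]{Ber} used in Theorem~\ref{T:Rigidity} yields the same final dimension contradiction.
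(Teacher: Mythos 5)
Your first two steps coincide with the paper's: the inequality $\dim G_{p_0}\geq \dim G-n>\frac{1}{2}(n-1)(n-2)$ at a regular point, followed by the lemma on closed subgroups of $\mathrm{O}(n)$ (the lemma on p.~48 of \cite{Ko}, valid precisely for $n\neq 4$), correctly yields that $G_{p_0}$ contains $\SO(n)$ and hence acts transitively on $\Sigma_{p_0}\cong\sphere^{n-1}$. (Your arithmetic here is right; the paper's displayed lower bound $\frac{1}{2}(n-1)(n-2)+1$ contains a harmless slip.)

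The gap is in the final step, and it is genuine: you never actually establish that $\dim Gp_{0}=n$. The dichotomy ``the set of orbit directions at $p_{0}$ is empty or all of $\Sigma_{p_{0}}$'' is fine once those directions are defined as limits of directions $\uparrow_{p_{0}}^{q_{i}}$ with $q_{i}\in Gp_{0}$, but the implication ``all of $\Sigma_{p_{0}}$ $\Rightarrow$ the orbit contains geodesic segments emanating in every direction, hence is $n$-dimensional'' is unjustified on both counts: orbits of isometric actions need not contain any ambient geodesic segment, and for a general closed subset $A\ni p_{0}$ the set of limit directions can be all of $\Sigma_{p_{0}}$ while $\dim A<n$ (e.g., a union of shrinking segments pointing in a dense set of directions), so one must exploit the homogeneity of the orbit (say via the join structure of $\Sigma_{p_{0}}$ relative to the orbit), which you do not do. Your fallback of adapting Lemma~\ref{L:onto} is circular as stated, since that lemma's openness argument presupposes $\dim\pi(\overline{V})=n$, which is exactly the assertion in question. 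The paper closes this step globally rather than infinitesimally: since $R_{X}$ is convex, for any $p,q\in R_{X}$ the midpoint $r$ of a geodesic from $p$ to $q$ is regular, $G_{r}$ acts transitively on $\Sigma_{r}$, and an element of $G_{r}$ reversing the geodesic at $r$ carries $p$ to $q$; hence $G$ is transitive on $R_{X}$, and density of $R_{X}$ together with closedness of orbits gives $X=Gp_{0}$, whence $\dim G=n+\dim G_{p_{0}}=\frac{1}{2}n(n+1)$. Replacing your tangent-direction dichotomy with this (or an equivalent) transitivity argument completes the proof.
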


% PROOF OF GAPS THEOREM

\begin{proof}

 Let $G\leq \Iso(X)$ be a subgroup of dimension $m$, and let $G_p$ be the isotropy group of $G$ at a regular point $p\in R_X$. Observe first that the action of $G$ on $X$ induces an embedding $F:G/G_p \hookrightarrow X$, so that

\begin{equation}
\label{EQ:isotropy_ineq}
\dim G_p\geq \dim G -\dim X.
\end{equation}

Let $m>\frac{1}{2}n(n-1)+1$. Then, using inequality~\ref{EQ:isotropy_ineq}, we get
\begin{eqnarray*}
\dim G_p &
> & \frac{1}{2}n(n-1)+1-n\\[7pt]
& = & \frac{1}{2}(n-1)(n-2)+1.
\end{eqnarray*}
Since $\Sigma_p$ is isometric to the unit round sphere $\mathbb{S}^{n-1}$,  the isotropy group $G_p$ is a closed subgroup of $\mathrm{O}(n)$. It follows from the lemma on page 48 of \cite{Ko} that $G_p=\mathrm{SO}(n)$ or $\mathrm{O}(n)$, and therefore it acts transitively on $\Sigma_p$.

Recall that the set $R_X$ of regular points of $X$ is convex (see \cite{Pet}). Thus, given points $p,q\in R_X$  and a geodesic $\gamma$ between them, its midpoint $r$ is in $R_X$, and thus $G_r$ acts transitively on $\Sigma_r$. There is then an isometry fixing $r$ and mapping $\gamma$ to $\gamma^{\mathrm{op}}$ $(=\gamma(a-t))$, hence mapping $p$ to $q$ and vice versa. This implies that $\Iso(X)$ acts transitively on $R_X$. Since $R_X$ is dense in $X$ and the orbits of $\Iso(X)$ are closed in $X$, it follows that $X=R_X$ and $\Iso(X)$ acts transitively on $X$. Hence 
\begin{eqnarray*}
 m=\dim G & = &  \dim X + \dim G_p \\
 		 & = &  n+ \dim \mathrm{O}(n)\\
		 & = & \frac{1}{2}n(n+1).
\end{eqnarray*}
\end{proof}

We can also prove the following rigidity result. 

% THM: RIGIDITY

\begin{thm}
\label{rigidity}
Let  $X$ be an Alexandrov space of dimension  $n\neq 4$. If $\Iso(X)$ has dimension $\frac{1}{2}n(n-1)+1$, then $X$ is isometric to a Riemannian manifold, and thus is one of those listed in Theorem~3.3, p. 54 in \cite{Ko}.
\end{thm}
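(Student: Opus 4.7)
The plan is to adapt the strategy used in the proofs of Theorems~\ref{T:Rigidity} and \ref{T:gaps}: pin down the isotropy representation at a regular point, show that $X$ is homogeneous, and then apply Berestovski\u{\i}'s theorem. Let $G = \Iso(X)$ and fix a regular point $p \in R_X$. By Lemma~\ref{L:identity} the isotropy $G_p$ acts faithfully on $\Sigma_p \cong \mathbb{S}^{n-1}$, so it embeds as a closed (compact) subgroup of $\mathrm{O}(n)$, and the orbit-stabilizer inequality gives
\[
\dim G_p \;\geq\; \dim G - n \;=\; \tfrac{1}{2}(n-1)(n-2).
\]
The lemma on p.~48 of \cite{Ko}, which is exactly where the hypothesis $n \neq 4$ is used, implies that the only closed subgroups of $\mathrm{O}(n)$ of dimension at least $\tfrac{1}{2}(n-1)(n-2)$ have dimension either $\tfrac{1}{2}(n-1)(n-2)$ or $\tfrac{1}{2}n(n-1)$ (the latter being $\mathrm{SO}(n)$ or $\mathrm{O}(n)$).

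The ``large isotropy'' case has to be ruled out. Suppose there is a regular $p_0$ with $\dim G_{p_0} = \tfrac{1}{2}n(n-1)$, so that $G_{p_0}$ acts transitively on $\Sigma_{p_0}$ and $\dim Gp_0 = 1$. There are two sub-cases. If every regular point has isotropy of dimension $\tfrac{1}{2}n(n-1)$, then the midpoint/swap argument used in the proof of Theorem~\ref{T:gaps} applies at every regular midpoint, forcing $\Iso(X)$ to be transitive on $R_X$ and hence on $X$; but then the isotropy at any point has dimension $\dim G - n = \tfrac{1}{2}(n-1)(n-2)$, contradicting the assumption on $p_0$. Otherwise some regular $q$ has $\dim G_q = \tfrac{1}{2}(n-1)(n-2)$, so $\dim Gq = n$; combining a Perelman-Otsu-Shioya chart of $X$ at $q$ with a chart of $G/G_q$ through $[e]$, invariance of domain (as in the proof of Lemma~\ref{L:onto}) shows $Gq$ is open at $q$, hence open in $X$. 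Since $Gq$ is closed as a $G$-orbit and $X$ is connected, $Gq = X$; but then $p_0 \in Gq$ would force $G_{p_0}$ to be conjugate to $G_q$, again contradicting $\dim G_{p_0} = \tfrac{1}{2}n(n-1)$.

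Consequently $\dim G_p = \tfrac{1}{2}(n-1)(n-2)$ for every regular $p$, and $\dim Gp = n$. Since isometries preserve $R_X$, the orbit $Gp$ is contained in $R_X$ and, as $G/G_p$, is a topological $n$-manifold; the same invariance-of-domain argument as above shows $Gp$ is open at $p$, hence open in $X$. Since $Gp$ is also closed and $X$ is connected, $Gp = X$. Thus $X$ is a homogeneous finite-dimensional metric space with a lower curvature bound, and by Berestovski\u{\i}'s theorem \cite[Theorem~7]{Ber} it is isometric to a Riemannian manifold; the list of possibilities is then given by Theorem~3.3, p.~54 of \cite{Ko}.

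The step I expect to require the most care is the ruling out of the ``large isotropy'' scenario, since the swap argument of Theorem~\ref{T:gaps} presupposes that the midpoints of geodesics between the points being swapped have transitive isotropy on their spaces of directions --- something that is not automatic once one allows the two isotropy-dimension possibilities to coexist, hence the split into two sub-cases above. Everything else is either formal (orbit-stabilizer, invariance of domain at regular points, closedness of orbits) or a direct appeal to Kobayashi's subgroup lemma and Berestovski\u{\i}'s theorem.
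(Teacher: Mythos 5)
Your proof is correct and follows essentially the same route as the paper: faithful isotropy action on $\Sigma_p\cong\mathbb{S}^{n-1}$ at a regular point, Kobayashi's lemma on closed subgroups of $\mathrm{O}(n)$ (which is where $n\neq 4$ enters), transitivity of $\Iso(X)$, and Berestovski{\u\i}'s theorem. The only difference is that the paper avoids your two-subcase analysis by arguing by contradiction: if the action were not transitive, every orbit would have dimension at most $n-1$, so $\dim \Iso(X)_p\geq \dim\Iso(X)-(n-1)=\tfrac{1}{2}(n-1)(n-2)+1$ strictly exceeds $\tfrac{1}{2}(n-1)(n-2)$, forcing $G_p=\mathrm{SO}(n)$ or $\mathrm{O}(n)$ outright and letting the swap argument give transitivity at once.
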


% PROOF

\begin{proof}
Since a homogeneous Alexandrov space must be isometric to a Riemannian manifold (cf.~\cite[theorem 7]{Ber}), it suffices to show that $\Iso(X)$ acts transitively on $X$.

 If $\Iso(X)$ does not act transitively on $X$, then 
\[
\dim \Iso(X)_p \geq \dim \Iso(X)-(n-1)=\frac{1}{2}(n-1)(n-2)+1.
\]
As in the proof of the previous theorem,  $\Iso(X)_p=\mathrm{O}(n)$ or $\mathrm{SO}(n)$,  $R_X=X$ and $\Iso(X)$ acts transitively on $X$. 
\end{proof}

% EXCEPTIONAL CASE: DIM 4 

The case $n=4$ is slightly different from the cases considered in Theorems~\ref{T:gaps} and \ref{rigidity} above, and we treat it separately. As pointed out at the beginning of this section, its peculiarity arises from the fact that $\mathrm{SO}(4)$ is not simple. The main tool used is Ishihara's paper \cite{Is}, where the Riemannian case was analyzed. For $n=4$, the statement of Theorem~\ref{T:gaps} changes to allow for the possibility of $8$-dimensional subgroups in $\Iso(X)$; its proof follows along the same lines as for general $n$, although one needs to use the lemma in  \cite[p.~347]{Is}, ruling out the existence of $5$-dimensional subgroups of $\mathrm{SO}(4)$. Thus, the dimension of $G_p$ must be $6$, and therefore it acts transitively on $\mathbb{S}^3$. To obtain the corresponding rigidity results one proceeds as in the proof of Theorem~\ref{rigidity}. Using the fact that $\mathrm{SO}(4)$ has no $5$-dimensional subgroups, it is easy to see that a $4$-dimensional Alexandrov space $X$ with  a group of isometries $G$ of dimension $7$ or $8$ must be a homogeneous space. Therefore, $X$ must be isometric to a homogeneous Riemannian manifold, and hence one of those considered by Ishihara in \cite{Is}. When $G$ is $7$-dimensional, this yields the analog of Theorem~\ref{rigidity} in  dimension $4$.  In the exceptional case, where $G=8$, the space $X$ must be isometric to a K\"ahler manifold of constant holomorphic sectional curvature (cf.~\cite[section 4]{Is}. These K\"ahler manifolds do not have higher dimensional analogues in the list of manifolds that occur for general $n$ in Theorem~\ref{rigidity}, in contrast to the $4$-dimensional spaces with a $7$-dimensional group of isometries.

We conclude this section with an extension to Alexandrov spaces of Mann's  gap theorem in \cite{Ma}. Observe that we recover Theorem~\ref{T:gaps} by setting $k=1$ in the theorem below.

% THM: MANN GAPS

\begin{thm}
Let $X$ be an $n$-dimensional Alexandrov space with $n\neq 4, 6, 10$. Then the isometry group $\Iso(X)$ contains no closed subgroup $G$ such that the dimension of $G$ falls into any of the ranges:
\\

\[
{n-k+1 \choose 2}+{k+1 \choose 2} < \dim G < {n-k+2 \choose 2}, \quad k\geq 1.
\]

% PROOF

\begin{proof}
Mann's arguments in \cite{Ma} carry over verbatim to our case, except for one detail:  In Cases A and B in the proof of  Theorem~1 in \cite{Ma}, one should choose the point $x\in X$ to be regular and lying in a principal orbit; this can be done, since the principal stratum is open and dense in $X$ and the set of regular points is dense in $X$ (cf.~section~\ref{S:Prelims}).
 
\end{proof}

\end{thm}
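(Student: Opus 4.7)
The plan is to import Mann's proof from \cite{Ma} essentially unchanged, supplying the one Alexandrov-specific ingredient that his argument needs: a basepoint at which the slice representation realizes the isotropy as a closed subgroup of $O(n)$.

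Concretely, the first step is to select a point $x \in X$ that is simultaneously regular and lies on a principal orbit of $G$. This is possible because the principal stratum is open and dense in $X$ by the Principal Orbit Theorem of subsection~\ref{SS:PIThm}, while the regular set $R_X$ is also dense in $X$ (cf.~section~\ref{S:Prelims}); any open subset of the principal stratum therefore meets $R_X$. At such an $x$ the space of directions $\Sigma_x$ is isometric to the round sphere $\mathbb{S}^{n-1}$, and Lemma~\ref{L:identity} guarantees that the slice representation $G_x \to \Iso(\Sigma_x) = O(n)$ is injective. Combined with the continuous injection $G/G_x \hookrightarrow X$ and the dimension bound $\dim(G/G_x) \leq n$, this yields the inequality $\dim G_x \geq \dim G - n$, which is the exact inequality Mann invokes in his setup.

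From this point I would simply invoke the remainder of Mann's proof, whose core is the analysis of the possible dimensions of closed subgroups of $O(n)$ acting on $\mathbb{S}^{n-1}$. His Cases A and B require only the inequality above and the group-theoretic restriction $G_x \subseteq O(n)$; the excluded dimensions $n=4,6,10$ appear because of the exceptional transitive actions on spheres (from $Sp(1)\cdot Sp(k)$, $G_2$, and the $Spin(7)$, $Spin(9)$ cases) available in those dimensions. The main potential obstacle is verifying that Mann never uses the smooth structure beyond the slice representation at one point, since the argument is built from the orbit map, the subgroup structure of $O(n)$, and representation theory, I expect — and this is what makes the statement reasonable — that no further adaptation is needed beyond the basepoint choice described above.
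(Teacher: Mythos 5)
Your proposal matches the paper's proof: both reduce to Mann's argument and supply the single Alexandrov-specific ingredient, namely a basepoint $x$ that is simultaneously regular and on a principal orbit (available because the principal stratum is open and dense while $R_X$ is dense), so that $G_x$ embeds in $\Iso(\Sigma_x)=O(n)$ and Mann's Cases A and B apply unchanged. Your added remarks on the faithfulness of the isotropy representation via Lemma~\ref{L:identity} and on the origin of the excluded dimensions are consistent elaborations, not a different route.
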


%--------------------------------------------------------------------------------------------------
% SECTION: ISOMETRY GROUPS OF ALEX SPACES WITH BOUNDARY
%--------------------------------------------------------------------------------------------------

\section{ Isometry groups of Alexandrov spaces with boundary}
\label{S:Bdry}

We now extend to Alexandrov spaces  results by Chen, Shi and Xu \cite{CSX} for Riemannian manifolds with boundary. We follow their proof closely, noting that one must make appropriate modifications for it to work in the Alexandrov case.  We start with the following result.

% THM: DIMENSION BOUND

\begin{thm}
\label{T:Dim_bound_bdry} If $X$ is an $n$-dimensional  Alexandrov space with non-empty boundary $\partial X$, then 
\[
\dim \Iso(X) \leq \frac{1}{2}(n-1)(n-2).
\]
\end{thm}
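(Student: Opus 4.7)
The plan is to establish the bound by induction on $n$, paralleling the argument in the proof of Theorem~\ref{dimension_bound}. The base case $n=1$ is immediate, since a one-dimensional Alexandrov space with non-empty boundary is an interval or a half-line, whose isometry group is discrete.

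The key structural observation for the inductive step is that every isometry of $X$ must preserve $\partial X$. This is forced by the intrinsic definition of the boundary: a point $p$ lies in $\partial X$ precisely when $\Sigma_p$ has non-empty boundary, and since every $f \in \Iso(X)$ induces an isometry $df_p:\Sigma_p\to\Sigma_{f(p)}$ of spaces of directions, this property is invariant. Setting $G=\Iso(X)$, I would then fix a point $p\in\partial X$ and apply the orbit-isotropy decomposition $\dim G=\dim G_p+\dim(G\cdot p)$. On the one hand, the orbit $G\cdot p$ lies in $\partial X$, so $\dim(G\cdot p)\leq n-1$. On the other hand, by Lemma~\ref{L:identity} the isotropy $G_p$ acts faithfully on $\Sigma_p$, which is an $(n-1)$-dimensional Alexandrov space with non-empty boundary, so the induction hypothesis applies and bounds $\dim G_p$ by the same quadratic expression evaluated in dimension $n-1$.

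The main obstacle is sharpening this count to the stated $\tfrac{1}{2}(n-1)(n-2)$ rather than the weaker bound obtained by a careless combination of orbit and isotropy dimensions. A natural route is to choose $p$ to be a regular point of $\partial X$ itself, viewed as an Alexandrov space without boundary (using Perelman's theorem that $\partial X$ is again Alexandrov). At such a point, $\Sigma_p$ is isometric to a closed hemisphere $\mathbb{S}^{n-1}_+$ and $G_p$ must preserve its unique apex — the inward-normal direction — so $G_p$ embeds into $\mathrm{O}(n-1)$. A Myers-Steenrod-type analysis, analogous to the one carried out for the map $F:G\to X$ in the proof of Theorem~\ref{dimension_bound} but applied now to the induced action of $G$ on the regular part of $\partial X$, should then restrict the orbit dimension and, combined with the isotropy bound, produce the desired estimate. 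Verifying that this Myers-Steenrod reduction is valid at a point of $\partial X$ — i.e., that the relevant local charts from \cite{OS,Pe} and the section argument in the proof of Theorem~\ref{dimension_bound} go through at boundary points of $X$ — is the most delicate part of the proof.
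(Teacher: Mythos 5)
Your first two paragraphs reproduce the paper's proof almost verbatim: induction on $n$, the observation that the orbit of a boundary point stays inside $\partial X$ and hence has dimension at most $n-1$, and the faithful action (via Lemma~\ref{L:identity}) of the isotropy group $G_p$ on $\Sigma_p$, which is an $(n-1)$-dimensional Alexandrov space with non-empty boundary, to which the inductive hypothesis is applied. You are also right to be suspicious of the arithmetic: that count gives $\dim G \le (n-1) + \tfrac{1}{2}(n-2)(n-3) = \tfrac{1}{2}(n-1)(n-2)+1$, not $\tfrac{1}{2}(n-1)(n-2)$, and the paper's ``the conclusion is immediate'' does not close this gap.

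The resolution, however, is not to sharpen the count but to correct the statement. The bound $\tfrac{1}{2}(n-1)(n-2)$ is false as written: the closed unit ball $\mathbb{D}^n\subset\RR^n$ is an $n$-dimensional Alexandrov space with non-empty boundary and $\Iso(\mathbb{D}^n)=\mathrm{O}(n)$ has dimension $\tfrac{1}{2}n(n-1)$; already for $n=2$ the round disc violates the claimed bound of $0$. The sharp bound is $\dim\Iso(X)\le\tfrac{1}{2}n(n-1)$ --- this is exactly the dimension attained by every model space listed in Theorem~\ref{T:Rigidity_bdry}, whose principal orbits are $(n-1)$-spheres with principal isotropy $\SO(n-1)$ of dimension $\tfrac{1}{2}(n-1)(n-2)$ --- and with that statement your own induction closes perfectly: $\dim G_p\le\tfrac{1}{2}(n-1)(n-2)$ by the hypothesis applied to $\Sigma_p$, and $(n-1)+\tfrac{1}{2}(n-1)(n-2)=\tfrac{1}{2}n(n-1)$. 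Your third paragraph is therefore chasing an unattainable improvement, and its main tool is unavailable in any case: whether $\partial X$ with its intrinsic metric is again an Alexandrov space is a well-known open problem, which the paper explicitly flags at the start of this very proof and deliberately avoids using. Even granting the hemisphere picture of $\Sigma_p$ at a nice boundary point, the embedding $G_p\hookrightarrow\mathrm{O}(n-1)$ only reproduces the isotropy bound $\tfrac{1}{2}(n-1)(n-2)$ that the induction already supplies, and no Myers--Steenrod refinement can push the orbit contribution below $n-1$, since the orbit is all of $\mathbb{S}^{n-1}$ in the model cases.
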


% PROOF

\begin{proof}

Since it is not known whether the boundary of an Alexandrov space is also an Alexandrov space, we must proceed with some care. We know that $\partial X$ is a metric space of Hausdorff dimension $n-1$. Given a point $p\in \partial X$, the orbit of $p$ under the action of $G=\Iso(X)$ is entirely contained in $\partial X$, and hence is of dimension at most $n-1$.    

Now, we proceed by induction, the case $n=1$ being trivial. Let $p\in \partial X$ and observe that the space of directions $\Sigma_p$ is an $(n-1)$-dimensional Alexandrov space with boundary. The isotropy $G_p$ acts on $\Sigma_p$ by isometries and, by the induction hypothesis, $\dim G_p\leq \frac{1}{2}(n-2)(n-3)$. The conclusion is immediate\end{proof}

As pointed out in the proof of the previous theorem, it is not known, in general, whether the boundary of an Alexandrov space is an Alexandrov space (with its induced intrinsic metric). The following proposition shows that, if the isometry group of an  Alexandrov space with boundary has maximal dimension, then the boundary is, in fact, a Riemannian manifold.

% PROP: BOUNDARY IS AN ALEXANDROV SPACE

\begin{prop}
Let $X$ be an $n$-dimensional  Alexandrov space with non-empty boundary $\partial X$. If $\dim \Iso(X)=\frac{1}{2}(n-1)(n-2)$, then each connected component of $\partial X$, with its induced intrinsic metric, is isometric to one of the  space forms in Theorem~\ref{T:Rigidity}.
\end{prop}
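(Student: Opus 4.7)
The plan is to derive from the equality hypothesis that $G=\Iso(X)$ acts transitively on each connected component of $\partial X$, and then to apply the rigidity statement of Theorem~\ref{T:Rigidity} to each such component. Revisiting the proof of Theorem~\ref{T:Dim_bound_bdry}, for $p\in\partial X$ the bound decomposes as
\[
\dim G = \dim Gp + \dim G_p,
\]
with $\dim Gp\leq n-1$ (since $Gp\subseteq\partial X$) and $\dim G_p$ bounded inductively via the corresponding bound for $\Iso(\Sigma_p)$. The equality hypothesis forces both inequalities to be sharp; in particular $\dim Gp=n-1=\dim\partial X$, so every orbit in $\partial X$ is open. Combined with the fact that orbits of the isometry group are closed in $X$, each orbit is a union of connected components of $\partial X$, and $G$ acts transitively on every connected component $C$ of $\partial X$.

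Next, the companion equality $\dim G_p=\dim\Iso(\Sigma_p)$ (both maximal) lets me apply the proposition inductively, at dimension $n-1$, to the space of directions $\Sigma_p$, which is itself an $(n-1)$-dimensional Alexandrov space with non-empty boundary. The induction yields that each connected component of $\partial\Sigma_p$ is an $(n-2)$-dimensional space form, pinning down the local model for $C$ at each of its points.

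For the global identification, the component $C$ equipped with its induced intrinsic metric is a homogeneous metric space of dimension $n-1$: the restricted $G$-action on $C$ is transitive by the first paragraph, and preserves the intrinsic metric since isometries of $X$ preserve lengths of curves contained in $C\subseteq X$. The kernel of this action on $C$ is discrete, since any $g\in G$ fixing $C$ pointwise must also fix $\partial\Sigma_p$ pointwise for every $p\in C$, and by the local model of the previous paragraph together with Lemma~\ref{L:identity} this forces $g$ to be the identity. Assuming $C$ inherits a lower curvature bound, Berestovski{\u\i}'s theorem~\cite{Ber} identifies $C$ with a Riemannian $(n-1)$-manifold, and a dimension count together with Theorem~\ref{T:Rigidity} then concludes that $C$ is isometric to one of the four listed space forms.

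The hard part will be the step just assumed, namely producing a lower curvature bound on $C$ with its intrinsic metric: whether the boundary of an Alexandrov space is itself Alexandrov is a well-known open problem in general, so the maximal-symmetry hypothesis must be used essentially here, presumably through the uniform local model of round-sphere directions along the boundary that is forced by the inductive step.
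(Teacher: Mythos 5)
Your opening step matches the paper: maximality forces $\dim Gp = n-1$, invariance of domain makes the orbit open, and being closed it is a whole component $C$ of $\partial X$, on which $G$ acts transitively by intrinsic isometries. But the argument then stalls exactly where you say it does, and the gap is real: you cannot apply Theorem~\ref{T:Rigidity} (or the ``homogeneous Alexandrov space is Riemannian'' form of Berestovski{\u\i}'s theorem) to $C$ without knowing that $C$ with its intrinsic metric has a lower curvature bound, and whether the boundary of an Alexandrov space is itself Alexandrov is open. Your suggestion that the curvature bound should come from ``the uniform local model of round-sphere directions along the boundary'' is not an argument, and the inductive detour through $\partial\Sigma_p$ does not produce one; knowing the spaces of directions does not control the comparison geometry of $C$.

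The paper closes this gap by a different route that never needs $C$ to be Alexandrov. Since $C$ is a locally compact, finite-dimensional \emph{homogeneous} space with intrinsic metric, Theorem~7 of \cite{Ber} already applies in its Finsler form: $C$ is a homogeneous Finsler manifold. Maximality of $\dim G_p$ then forces the isotropy representation on directions tangent to the boundary to be transitive on unit vectors, so the Finsler unit spheres are round and the structure is Riemannian. At that point one is in the smooth category and Kobayashi's rigidity theorem \cite[Theorem 3.1]{Ko} for Riemannian $(n-1)$-manifolds with isometry group of maximal dimension $\tfrac{1}{2}(n-1)n$ gives the list of space forms directly --- no curvature bound on $C$ and no appeal to Theorem~\ref{T:Rigidity} is ever needed. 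To repair your proof you should replace the assumed curvature bound with this Finsler-to-Riemannian upgrade; as written, the proposal is incomplete.
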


% PROOF

\begin{proof}
Let $G=\Iso(X)$ and fix $p\in \partial X$. It follows from the proof of Theorem~\ref{T:Dim_bound_bdry} that, since the dimension of $G$ is maximal,  the dimension of the orbit $Gp\subset \partial X$ is $n-1$ and the dimension of the isotropy group $G_p$ is maximal. By the Theorem of Invariance of Domain, the orbit  $G p$ is open and, since it is also closed,   it must correspond to a connected component $B$ of $\partial X$. It follows from Theorem~7 in \cite{Ber} that $B$ is a Finsler homogeneous manifold. Since $G_p$ is of maximal dimension, the unit vectors at each point tangent to the boundary must be  a round sphere. Thus, the Finsler structure is Riemannian and the conclusion follows from Kobayashi's rigidity result \cite[Theorem 3.1]{Ko}.

\end{proof}

We conclude this section with the topological classification of Alexandrov spaces with boundary and isometry group of maximal dimension.

% THM: RIGIDITY 

\begin{thm}
\label{T:Rigidity_bdry}
Let $X$ be an $n$-dimensional  Alexandrov space with non-empty boundary $\partial X$. If $\dim \Iso(X)=\frac{1}{2}(n-1)(n-2)$, then the following hold:
\vspace{.1in}
\begin{enumerate}
	\item If $X$ is compact, then it is  homeomorphic to one of the following spaces:\vspace{.1in}
	
		\begin{enumerate}
			\item A closed $n$-dimensional unit ball in $\RR^n$.\vspace{.1in}
			\item A cylinder $\mathbb{S}^{n-1}\times [0,1]$ or $\RR P^{n-1}\times [0,1]$.\vspace{.1in}
			\item $\RR P^{n}$ with a small open disc removed.\vspace{.1in}
			\item A cone over $\RR P^{n-1}$. \\ 
		\end{enumerate}
	\item If $X$ is non-compact and $\partial X$ has a compact component, then $X$ is  homeomorphic to a half open cylinder $\mathbb{S}^{n-1}\times [0,1)$ or $\RR P^{n-1}\times [0,1)$. \\ %	
	\item If $X$ is non-compact and every component of $\partial X$ is non-compact, then $X$ is  homeomorphic to $\RR^{n-1}\times [0,1)$ or to $\RR^{n-1}\times [0,1]$.
\end{enumerate}

\end{thm}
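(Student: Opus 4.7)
The plan is to show that $G := \Iso(X)$ acts on $X$ with cohomogeneity one and then classify $X$ by analysing its orbit space $X^{*} = X/G$ together with the principal and singular orbit data.

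First, the distance-to-boundary function $f(x) = \dist(x,\partial X)$ is $G$-invariant, so every $G$-orbit is contained in a level set of $f$. By the preceding proposition, $G$ acts transitively on each connected component of $\partial X$, and each such component is an $(n-1)$-dimensional space form. Using the Isotropy Lemma applied to a minimal geodesic joining a fixed $p\in\partial X$ to the interior, together with the fact that $\dim G_{p}=\tfrac{1}{2}(n-2)(n-3)$ is the maximal value permitted by Theorem~\ref{T:Dim_bound_bdry} applied to $\Sigma_{p}$, the principal isotropy in the interior near $p$ has the same dimension. Hence the principal orbits are $(n-1)$-dimensional level sets of $f$, and $X^{*}$ is a one-dimensional Alexandrov space with non-empty boundary. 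This forces $X^{*}$ to be one of $[0,L]$, $[0,L)$ or $[0,\infty)$.

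Next, I would identify what can occur at each endpoint of $X^{*}$. Such an endpoint corresponds either to a connected component of $\partial X$ (a space form of dimension $n-1$) or to a singular orbit lying in the interior of $X$. For the latter the normal slice is a Euclidean cone whose link is an $(n-1)$-dimensional Alexandrov space of curvature $\geq 1$; for the $\tfrac{1}{2}(n-2)(n-3)$-dimensional isotropy to act on this link in a way compatible with the principal isotropy on $\sphere^{n-2}$, the only admissible options are a point with link $\sphere^{n-1}$ (producing the local disc structure of a manifold) or a point with link $\RR P^{n-1}$ (producing the singular cone over $\RR P^{n-1}$). Assembling $X$ from its endpoint data via the normal slice description (equivalently, from the structure theory of cohomogeneity one Alexandrov spaces in \cite{GaSe}), the theorem follows by enumeration. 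For compact $X$ the orbit space is $[0,L]$, and the admissible pairs give the two cylinders in (1)(b); the closed ball in (1)(a) or the cone in (1)(d) when one end collapses to a point; and the mixed case in which one end is a spherical principal orbit while the opposite end is a lower-dimensional orbit $\RR P^{n-1}$ obtained as the quotient of $\sphere^{n-1}$ by the antipodal map, which reconstructs $\RR P^{n}$ with an open disc removed, giving (1)(c). For non-compact $X$ with a compact boundary component, $X^{*}$ is a half-open interval, no collapse occurs at its open end, and the principal orbit stays constantly $\sphere^{n-1}$ or $\RR P^{n-1}$, yielding (2). If every boundary component of $X$ is non-compact, the principal orbit is $\RR^{n-1}$ or $\Hh^{n-1}$, and again no collapse at the non-compact end is possible, so $X$ is one of the products in (3).

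The main obstacle will be the analysis of the mixed endpoint type that produces $\RR P^{n}$ minus a disc: one must verify that a compatible $G$-action is realised whose principal $\sphere^{n-1}$-orbit double-covers a singular $\RR P^{n-1}$-orbit, and distinguish it from the untwisted cylinder. A subsidiary technical point is to check that the isometry class of the principal orbit is constant along each connected component of the principal stratum; this reduces to a horizontal-lift argument using that the orbit projection $\pi\colon X\to X^{*}$ is a submetry, together with Lemma~\ref{L:identity} to rigidify the identifications along the lift.
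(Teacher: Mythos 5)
Your proposal follows essentially the same route as the paper: both establish that $\Iso(X)$ acts with cohomogeneity one via the level sets of the distance to $\partial X$, reduce to the orbit space being an interval, and classify $X$ by enumerating the admissible data at the two ends, with the paper encoding that data through the isotropy chain $\SO(n-1)\subseteq \mathrm{O}(n-1),\ \SO(n)$ and deferring cases (2) and (3) to \cite{CSX}, while you phrase the same enumeration through slices and links. One small terminological point: the $\RR P^{n-1}$ orbit producing $\RR P^{n}$ minus a disc is an \emph{exceptional} orbit of the same dimension $n-1$ (isotropy $\mathrm{O}(n-1)$ versus principal $\SO(n-1)$), not a lower-dimensional one.
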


% REMARK

\begin{rem}
The only new space in Theorem~\ref{T:Rigidity_bdry}  not appearing in the Riemannian case  is the cone over $\RR P^{n-1}$. 
\end{rem}

% PROOF

\begin{proof}[Proof of Theorem~\ref{T:Rigidity_bdry}]
By looking at level sets of the distance function to a component of  $\partial X$, we observe, as in \cite{CSX}, that the action of $\Iso(X)$ on $X$ is of cohomogeneity one. Thus the orbit space $X^*=X/\Iso(X)$ is homeomorphic  to either $[0,1]$ or   $[0,1)$.

 In case (1), the orbit space $X^*$ is homeomorphic to $[0,1]$. The isotropy at the endpoint of $X^*=[0,1]$ corresponding to a component of $\partial X$ is $\SO(n-1)$ or $\mathrm{O}(n-1)$, while the isotropy  at the orbits corresponding to points in the interior of $[0,1]$ is $\SO(n-1)$. The isotropy at the other endpoint must contain $\SO(n-1)$ and thus can only be $\mathrm{O}(n-1)$ or $\SO(n)$, yielding the desired spaces.  The only case not appearing in \cite{CSX} is the one in which the points $0$ and $1$ in $X^*\simeq [0,1]$ have, respectively, isotropy $\mathrm{O}(n-1)$ and $\SO(n)$. This corresponds to the cone over $\RR P^{n-1}$. Cases (2) and (3) follow as in \cite{CSX}.
\end{proof}

% REMARK

\begin{rem}  Since the action of $\Iso (X)$ on $X$ is of cohomogeneity one, the metric of $X$ is quite restricted. This idea was examined for smooth manifolds in \cite{CSX}, where the authors proved that the metrics were warped over an interval. It is natural to expect that similar versions can be given for Alexandrov spaces, although with certain modification on the warping function to reflect the possible lack of smoothness. The results in \cite{AB} can be used in this context to show that the metric is warped over an interval with a semiconcave function as warping factor. We have omitted the details for concision's sake, since they can be easily completed.
\end{rem}

%------------------------------------------------------------------------------------
% SECTION: SYMMETRIC AND LOC SYM SPACES
%------------------------------------------------------------------------------------

\section{Symmetric and locally symmetric Alexandrov spaces}
\label{S:Sym_spaces}
Following \cite{Be}, we say that an Alexandrov space $X$ is a \emph{locally} \emph{(uniformly)} \emph{symmetric space} if for every $p\in X$ there is a neighborhood $U(p)$ of $p$ and a number $r>0$ such that for all $q\in U(p)$, the ball $B_r(q)$ admits an isometric involution with $q$ as its only fixed point. The space is \emph{symmetric} if the involutions extend to all the space. In the above reference, Berestovski{\u\i} shows that for simply connected $G$-spaces, locally symmetric and symmetric are equivalent conditions.
However, the situation is different for Alexandrov spaces, as we show in the following example.

% EXAMPLE: NOT ALL LOC SYM ALEX SPACES ARE SYMMETRIC

\begin{example}
\label{E:LSYM_NOTSYM}
 Consider the double disk $X$. It is locally symmetric, simply connected and it is not symmetric. In fact, geodesic involutions do not extend to isometries of the whole space for points different of the centers of the disks, or of the glued boundary. More generally, if $\Omega\subset M^n$ is a geodesic ball in a symmetric space, then $\mathrm{Double}(\Omega)$ is locally symmetric but not symmetric.
\end{example}

% EXAMPLE: NOT ALL LOC SYM (AND NOT SYM) ALEX SPACES ARE DOUBLES

\begin{example}
 Any regular polyhedral surface with an even number of edges meeting at each vertex, for example, the surface of an icosahedron, is locally symmetric but not symmetric, and is not the double of a geodesic ball in a symmetric space. The same holds for the double of a right cone over a circle of length less than $2\pi$. Thus, Example~\ref{E:LSYM_NOTSYM} does not exhaust all the possibilities for locally symmetric Alexandrov spaces that are not symmetric. The abundance of examples makes it difficult to give a general structure result for these spaces. 
\end{example}

% EXAMPLE: NOT EVERY DOUBLE OF A CONVEX DOMAIN IS LOC SYM

\begin{example}
The double of the half-disc $D^2\cap \{\,(x,y)\in \mathbb{R}^2: y\geq 0\,\}$ in $\mathbb{R}^2$ is a non-negatively curved Alexandrov space that is neither symmetric nor locally symmetric, as can be seen by considering its vertex points.  This shows  that not every double of a convex domain in a symmetric space is locally symmetric. 
\end{example}

We conclude our paper with the main result of this section.

% THM: SYM ALEX SPACE IS RIEM MANIFOLD

\begin{thm}
 A symmetric Alexandrov space is isometric to a Riemannian manifold.
\end{thm}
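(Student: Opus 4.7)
The plan is to show that a symmetric Alexandrov space $X$ is homogeneous, after which Berestovski\u{\i}'s theorem \cite[Theorem~7]{Ber} (already invoked in Sections~\ref{S:Rigidity} and \ref{S:Bdry}) immediately identifies $X$ with a Riemannian manifold.

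First, for each $m\in X$, I would analyze the global isometric involution $s_m$ and show that it reverses every minimizing geodesic through $m$. Using the relation $s_m\circ\exp_m = \exp_m\circ\, ds_m$ (which appears in the proof of Lemma~\ref{L:identity}), any fixed direction $v\in\Sigma_m$ of $ds_m$ would produce a short geodesic $t\mapsto \exp_m(tv)$ of $s_m$-fixed points issuing from $m$, contradicting the isolation of $m$ as the unique fixed point of $s_m$ in $B_r(m)$. Hence $ds_m$ is a fixed-point-free isometric involution of $\Sigma_m$. To upgrade this to a genuine reversal of geodesics, I plan to use a midpoint argument: for a short minimizing geodesic $\gamma\colon[-\varepsilon,\varepsilon]\to X$ with $\gamma(0)=m$ and for small $t>0$, any midpoint of a minimizing path from $\gamma(t)$ to $s_m(\gamma(t))$ is itself $s_m$-invariant (since $s_m$ swaps the endpoints and preserves the path up to reversal) and close to $m$, hence equal to $m$ by the isolation of $\Fix(s_m)\cap B_r(m)$; combined with the non-branching of geodesics in Alexandrov spaces, this forces $s_m(\gamma(t))=\gamma(-t)$.

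Next I would deduce homogeneity. Given $p\in X$ and a nearby $q$, join them by a short minimizing geodesic and apply the symmetry at its midpoint $m$: the previous step yields $s_m(p)=q$, so the $\Iso(X)$-orbit of $p$ contains an open neighborhood of $p$. Since orbits of $\Iso(X)$ are closed (as recalled in Section~\ref{S:Prelims}) and $X$ is connected, it follows that $\Iso(X)$ acts transitively on $X$. An application of \cite[Theorem~7]{Ber} then concludes the proof.

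The main obstacle will be justifying the geodesic-reversal step. In the Riemannian case, $ds_m=-\Id$ on $T_mX$ follows from elementary eigenvalue analysis, but here $\Sigma_m$ may be a genuinely singular Alexandrov space of curvature $\geq 1$, so the correct behaviour of $ds_m$ on geodesic directions must instead be forced geometrically, by combining the global isometric nature of $s_m$, the local isolation of its fixed point, and the non-branching of minimizing geodesics in $X$.
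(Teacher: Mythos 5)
Your overall architecture (prove homogeneity, then invoke \cite[Theorem~7]{Ber}) is exactly the paper's, but the key step --- geodesic reversal by $s_m$ at an \emph{arbitrary} midpoint $m$ --- has a genuine gap, and it is located precisely where you suspect. What you actually need is not that $ds_m$ is fixed-point free on $\Sigma_m$, but that it displaces every geodesic direction by exactly $\pi$: writing $q=\gamma(t)$ and $v=\uparrow_m^{q}$, one has $|q\,s_m(q)|=2t$ if and only if $\angle(v,ds_m(v))=\pi$. On a singular space of directions these are very different conditions. For instance, if $\Sigma_m$ is the spherical suspension of a circle of length $\ell<2\pi$, the involution ``rotate by $\ell/2$ and flip the suspension'' is isometric and fixed-point free, yet moves equatorial directions by only $\ell/2<\pi$. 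Your midpoint trick does not repair this: the assertion that $s_m$ ``preserves the minimizing path up to reversal'' requires that path to be \emph{unique}, and in exactly these singular configurations it is not --- $s_m$ swaps two distinct minimizing segments from $q$ to $s_m(q)$, neither of which passes through $m$, so no $s_m$-fixed midpoint is produced and no contradiction with the isolation of $\Fix(s_m)$ arises. (A secondary issue: a fixed direction of $ds_m$ need not be a geodesic direction, so $\exp_m(tv)$ may not be defined, weakening even the fixed-point-freeness claim.) Since such involutions with isolated fixed points genuinely exist on singular cones, no purely local argument at $m$ can force the reversal; some global input is required.

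The paper sidesteps all of this by never symmetrizing at a singular point. It takes $x,y\in R_X$ and uses Petrunin's theorem \cite{Pet} that $R_X$ is convex, so the connecting geodesic, and in particular its midpoint $m$, lies in $R_X$; there $\Sigma_m\cong\sphere^{n-1}$, and a blow-up $\sigma^a:(aX,m)\to(aX,m)$ identifies $d\sigma_m$ with an orthogonal involution of $\RR^n$ with no nonzero fixed points, which must be $-\mathrm{Id}$. This gives transitivity of $\Iso(X)$ on $R_X$; since $R_X$ is dense and orbits of $\Iso(X)$ are closed, the orbit of a regular point is all of $X$, and Berestovski\u{\i}'s theorem applies. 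To salvage your write-up you should import these three ingredients --- convexity of $R_X$, the blow-up identification of $d\sigma_m$ at a regular point, and the closed-orbit/density argument to pass from transitivity on $R_X$ to transitivity on $X$ --- in place of the reversal-at-every-point step.
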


% PROOF

\begin{proof}
 Let $X^n$ be an $n$-dimensional Alexandrov space. Let $p\in X^n$ be a regular point, so that $\Sigma_p$ is isometric to the unit round sphere $\mathbb{S}^{n-1}$. Let $\sigma_p$ be the isometric involution at $p$, i.e., $\sigma_p^2=\mathrm{Id}$. Since $X^n$ is symmetric, $\sigma_p\in \mathrm{Iso}(X)$. We will now show that 
$\mathrm{d}\sigma_p:\Sigma_p \rightarrow \Sigma_p$ exists and agrees with $-\mathrm{Id}|_{\mathbb{S}^{n-1}}$.

Consider the sequence of isometries 
\[
 \sigma^a: (aX^n,p)\rightarrow (aX^n,p).
\]
Letting $a\rightarrow \infty$ we get an isometry
\[
\sigma^{\infty} : C_{o} (\mathbb{S}^{n-1})\simeq \mathbb{R}^n \rightarrow \mathbb{R}^n
\]
and $\mathrm{d}\sigma_p = \sigma^\infty |_{\mathbb{S}^{n-1}}$. Observe that $\mathrm{d}\sigma_p$ is $-\mathrm{Id}|_{\mathbb{S}^{n-1}}$, since $\sigma^\infty$ has no fixed points away from the origin and it is an orthogonal involution.

Let $x,y \in R_X$ and let $\gamma:[-a,a]\rightarrow X$ be a geodesic joining $x$ to $y$. Observe that the image of $\gamma$ is contained in $R_X$. Let $m=\gamma(0)$.  Since $\mathrm{d}\sigma_{m}=-\mathrm{Id}$, $\sigma_m$ sends $\exp_m(tv)$ to $\exp_m(-tv)$, we have that $\gamma(t)$ goes to $\gamma(-t)$. It follows that $\sigma_m$ preserves such geodesics, so $\mathrm{Iso}(X)$ acts transitively on $R_X$. Since $\Iso(X)$ has the compact-open topology, the orbit $\Iso(X)(p)$, $p\in R_X$, is closed and, containing all of $X\setminus S_X$, needs to be $X$. Therefore, $\Iso(X)$ acts transitively and $X$ is a homogeneous space. A result by Berestovski{\u\i} (cf.~\cite[Theorem~7]{Ber} applies to conclude that $X$ is isometric to a Riemannian manifold.

\end{proof}

% ----------------------------------------------------------------
% ----------------------------------------------------------------
% BEGIN BIBLIOGRAPHY----------------------------------------------
% ----------------------------------------------------------------

\bibliographystyle{amsplain}

\begin{thebibliography}{10}


\bibitem{AB} Alexander, S.~and Bishop, R., \emph{Curvature bounds for warped products of metric spaces}, Geom. Funct. Anal. 14 (2004), no. 6, 1143--1181.

\bibitem{BZ} Bagaev, A.~V. and Zhukova, N.~I., \emph{The isometry groups of Riemannian orbifolds}, (Russian) Sibirsk. Mat. Zh. 48 (2007), no. 4, 723--741; translation in Siberian Math. J. 48 (2007), no. 4, 579--592. 

\bibitem{Be} Berestovski{\u\i}, V.~N., \emph{Generalized symmetric spaces}, (Russian) Sibirsk. Mat. Zh. 26 (1985), no. 2, 3--17, 221; translation in Siberian Math. J. 26 (1985), no. 2, 159--170. 

\bibitem{Ber} Berestovski{\u\i}, V.~N., \emph{Homogeneous manifolds with an intrinsic metric II}, (Russian)  Sibirsk. Mat. Zh.  30  (1989),  no. 2, 14--28, 225;  translation in  Siberian Math. J.  30  (1989),  no. 2, 180--191.

\bibitem{BeG} Berestovski{\u\i}, V.~N. and Guijarro, L., \emph{A metric characterization of Riemannian submersions}, Ann. Global Anal. Geom. 18 (2000), no. 6, 577--588.

\bibitem{BePl} Berestovski{\u\i}, V.~N. and Plaut, C., \emph{Homogeneous spaces of curvature bounded below}, 
J. Geom. Anal. 9 (1999), no. 2, 203--219.

\bibitem{Br} Bredon, G.~E, \emph{Introduction to compact transformation groups}, Pure and Applied Mathematics, Vol. 46, Academic Press, New York-London, 1972. 

\bibitem{BBI} Burago, D., Burago, Y. and Ivanov, S., \emph{A course in metric geometry}, Graduate Studies in Mathematics, 33. AMS, Providence, RI, 2001.

\bibitem{BGP} Burago, Yu., Gromov, M.~and Perelman, G., \emph{Aleksandrov spaces with curvatures bounded below}, (Russian) Uspekhi Mat. Nauk 47 (1992), no. 2(284), 3--51, 222; translation in Russian Math. Surveys 47 (1992), no. 2, 1--58.

\bibitem{CSX} Chen, Z., Shi, Y.~ and Xu, B.,  \emph{The Riemannian manifolds with boundary and large symmetry},  Chin. Ann. Math. Ser. B 31 (2010), no. 3, 347--360. 

\bibitem{vDvW} Dantzig, D. van and Waerden, B.~L.~van der, \emph{\"Uber metrisch homogene R\"aume}, (German) Abh. Math. Sem. Univ. Hamburg 6 (1928), 374--376.

\bibitem{FY} Fukaya, K. and Yamaguchi, T.,  \emph{Isometry groups of singular spaces}, Math. Z.  216  (1994), no. 1, 31--44.

\bibitem{GaSe} Galaz-Garcia, F. and Searle, C., \emph{Cohomogeneity one Alexandrov spaces}, Transform. Groups 16 (2011), no. 1, 91--107.

\bibitem{Gr1} Grove, K., \emph{On the role of singular spaces in Riemannian geometry}, Preprint.

\bibitem{Gr2} Grove, K., \emph{Geometry of, and via, symmetries}, Conformal, Riemannian and Lagrangian geometry (Knoxville, TN, 2000), 31--53, Univ. Lecture Ser., 27, Amer. Math. Soc., Providence, RI, 2002.

\bibitem{Is} Ishihara, S., \emph{Homogeneous Riemannian spaces of four dimensions},  J. Math. Soc. Japan 7 (1955), no. 4, 345--370.

\bibitem{Kl} Kleiner, B., Ph.D. thesis, U.C. Berkeley (1989).

\bibitem{Ko} Kobayashi, S.,  \emph{Transformation groups in differential geometry}, Reprint of the 1972 edition. Classics in Mathematics. Springer-Verlag, Berlin, 1995. 

\bibitem{KN} Kobayashi, S. and Nomizu, K., \emph{Foundations of Differential Geometry}, Vol. I. Reprint of the 1963 original. Wiley Classics Library.  John Wiley \& Sons, Inc., New York, 1996.

\bibitem{KMS} Kuwae, K.,  Machigashira, Y. and Shioya, T., \emph{Beginning of analysis on Alexandrov spaces}, Geometry and topology: Aarhus (1998), 275--284, Contemp. Math., 258, Amer. Math. Soc., Providence, RI, 2000. 

\bibitem{Ly}Lytchak, A., \emph{Differentiation in metric spaces}, Algebra i Analiz 16 (2004), no. 6, 128--161; translation in St. Petersburg Math. J. 16 (2005), no. 6, 1017--1041.


\bibitem{Ma} Mann, L.~N., \emph{Gaps in the dimensions of isometry groups of Riemannian manifolds}, J. Differential Geometry 11 (1976), no. 2, 293--298.

\bibitem{MS} Myers, S.~B. and Steenrod, N.~E., \emph{The group of isometries of a Riemannian manifold}, Ann. of Math. (2)  40  (1939), no. 2, 400--416.

\bibitem{Ot} Otsu, Y., \emph{Differential geometric aspects of Alexandrov spaces}, Comparison geometry (Berkeley, CA, 1993--94), 135--148, Math. Sci. Res. Inst. Publ., 30, Cambridge Univ. Press, Cambridge, 1997.

\bibitem{OS} Otsu, Y. and Shioya, T.,  \emph{The Riemannian structure of Alexandrov spaces},  J. Differential Geom.  39  (1994), no. 3, 629--658.

\bibitem{Pe} Perelman, G.,  \emph{DC structure on Alexandrov space with curvature bounded below}, Preprint.

\bibitem{Pe2}Perelman, G., \emph{The entropy formula for the Ricci flow and its geometric applications}, arXiv:math/0211159v1 [math.DG] (2002).

\bibitem{Pe3}Perelman, G., \emph{Ricci flow with surgery on three-manifolds}, arXiv:math/0303109v1 [math.DG] (2003).

\bibitem{Pe4}Perelman, G.,  \emph{Finite extinction time for the solutions to the Ricci flow on certain three-manifolds}, arXiv:math/0307245v1 [math.DG] (2003).

\bibitem{PP} Perelman, G. and Petrunin, A., \emph{Extremal subsets in Aleksandrov spaces and the generalized Liberman theorem}, (Russian) Algebra i Analiz 5 (1993), no. 1, 242--256; translation in St. Petersburg Math. J. 5 (1994), no. 1, 215--227. 

\bibitem{Pet} Petrunin, A., \emph{Parallel transportation for Alexandrov space with curvature bounded below}, Geom. Funct. Anal. 8 (1998), no. 1, 123--148.

\bibitem{Pet2} Petrunin, A.,  \emph{Semiconcave functions in Alexandrov's geometry}, Surveys in differential geometry. Vol. XI, 137--201, Surv. Differ. Geom., 11, Int. Press, Somerville, MA, 2007.

\bibitem{SY} Shioya, T. and Yamaguchi, T., \emph{Collapsing three-manifolds under a lower curvature bound}, J. Differential Geom. 56 (2000), no. 1, 1--66.

\bibitem{Wi} Wilking, B., \emph{Nonnegatively and positively curved manifolds}, Surveys in differential geometry. Vol. XI, 25--62, Surv. Differ. Geom., 11, Int. Press, Somerville, MA, 2007.

\end{thebibliography}

% ----------------------------------------------------------------
% END BIBLIOGRAPHY-------------------------------------------------
% ----------------------------------------------------------------

\end{document}